 \newtheorem{definition}{Definition}[section]
 \newtheorem{theorem}[definition]{Theorem}
 \newtheorem{lemma}[definition]{Lemma}
 \newtheorem{proposition}[definition]{Proposition}
 \newtheorem{corollary}[definition]{Corollary}
 \newtheorem*{theorem*}{Theorem}
\newtheorem*{proposition*}{Proposition}
\newtheorem*{lemma*}{Lemma}
 \theoremstyle{remark}
  \newtheorem*{acknowledgements}{Acknowledgements}
\newcommand{\op}[1]{\operatorname{#1}}
\newcommand{\tr}{\ensuremath{\op{tr}}}
\newcommand{\Tr}{\ensuremath{\op{Tr}}}
\newcommand{\Tra}{\ensuremath{\op{Trace}}}
\newcommand{\TR}{\ensuremath{\op{TR}}}
\newcommand{\Res}{\ensuremath{\op{Res}}}
\newcommand{\res}{\ensuremath{\op{res}}}
\newcommand{\Sp}{\op{Sp}}
\newcommand{\End}{\ensuremath{\op{End}}}
\newcommand{\C}{\ensuremath{\mathbb{C}}}
\newcommand{\N}{\ensuremath{\mathbb{N}}}
\newcommand{\R}{\ensuremath{\mathbb{R}}}
\newcommand{\Rn}{\ensuremath{\mathbb{R}^{n}}}
\newcommand{\URn}{\ensuremath{U\times \mathbb{R}^{n}}}
\newcommand{\URno}{\ensuremath{\mathbb{R}^{n}\setminus 0}}
\newcommand{\Z}{\ensuremath{\mathbb{Z}}}
\newcommand{\CZ}{\ensuremath{\mathbb{C}\setminus\mathbb{Z}}}
\newcommand{\cL}{\ensuremath{\mathcal{L}}}
\newcommand{\cW}{\ensuremath{\mathcal{W}}}
\newcommand{\psido}{$\Psi$DO}
\newcommand{\psidos}{$\Psi$DOs}
\newcommand{\pdoi}{\ensuremath{\Psi^{\text{int}}}}
\newcommand{\pdoz}{\ensuremath{\Psi^{\Z}}}
\newcommand{\pdocz}{\ensuremath{\Psi^{\CZ}}}
\newcommand{\ord}{{\op{ord}}}
\newcommand{\up}{\uparrow}
\newcommand{\down}{\downarrow}
\newcommand{\updown}{\uparrow \downarrow}
\newcommand{\zetaup}{\zeta_{\scriptscriptstyle{\uparrow}}}
\newcommand{\zetadown}{\zeta_{\scriptscriptstyle{\downarrow}}}
\newcommand{\zetaupdown}{\zeta_{\scriptscriptstyle{\updown}}}
\begin{document}

    \title{ON THE SINGULARITIES OF THE ZETA AND ETA FUNCTIONS OF AN ELLIPTIC OPERATOR}
 \author{Paul Loya}
 \address{Department of Mathematics, SUNY Binghamton University, Binghamton, NY, USA}
 \email{paul@math.binghamton.edu}
 \author{Sergiu Moroianu}
 \address{Institutul de Matematic\u{a} al Academiei Rom\^{a}ne, Bucharest, Romania}
 \email{moroianu@alum.mit.edu}
 \author{Rapha\"el Ponge}
\address{IPMU \& Graduate School of Mathematical Sciences, University of Tokyo, Tokyo, Japan}
\email{ponge@ms.u-tokyo.ac.jp}
\thanks{S.M.~was partially supported by grant PN-II-ID-PCE 1188 265/2009 (Romania).  R.P.~was partially
supported by JSPS Grant-in-Aid 30549291 (Japan).}
\keywords{Eta functions, noncommutative residue, pseudodifferential operators.}
 \subjclass[2000]{Primary 58J50, 58J42; Secondary 58J40}

 \numberwithin{equation}{section}

 \begin{abstract}
Let $P$ be a selfadjoint elliptic operator of order $m>0$ acting on the sections of a Hermitian vector bundle over a
compact Riemannian manifold of dimension $n$. General arguments show that its zeta and eta functions may have poles only at
points of the form $s=\frac{k}{m}$, where $k$ ranges over all non-zero integers~$\leq n$. In this paper,
we construct elementary and explicit examples of perturbations of $P$ which make the zeta and eta functions become singular at
\emph{all} points at which they are allowed to have singularities. We proceed within three classes of operators: Dirac-type
operators, selfadjoint first-order differential operators, and selfadjoint elliptic pseudodifferential operators.
As consequences, we obtain genericity
results for the singularities of the zeta and eta functions in those settings. In particular, in the setting of Dirac-type
operators we obtain a purely analytical proof of a well known result of Branson-Gilkey~\cite{BG:REFODT}, which was 
obtained by invoking Riemannian invariant theory.
 As it turns out, the results of this paper contradict Theorem~6.3 of~\cite{Po:SAZFNCR}.  Corrections to that statement 
 are given in Appendix~\ref{sec:eta-odd-class}.
\end{abstract}

\maketitle

\section{Introduction}

Let $P:C^{\infty}(M,E)\rightarrow C^{\infty}(M,E)$ be a
selfadjoint elliptic \psido\ of order $m$, $m \in \N$, acting on the sections of a Hermitian vector bundle $E$ over a compact
Riemannian manifold $M^{n}$. The zeta and eta functions of $P$ are two of the most important spectral functions that
can be associated to $P$. Not only do they allow us to study the spectral properties of $P$ (including its spectral asymmetry), but when $P$ is a geometric operator 
(e.g., a Laplace-type or Dirac-type operator) they may carry a lot of a geometric information on the manifold $M$.

In particular, the residues of the zeta and eta functions are of special interest. More precisely, on the one hand, the residues at
integer points of the
zeta functions of the square of the Dirac operators are important in the context of noncommutative geometry (see, e.g.,
\cite{Ka:DOG}, \cite{Po:NCGLDVRG}). On the other hand, the residues of the eta function at integer points naturally come into in the index formula for Dirac
operators on manifolds with singularities (see~\cite{BS:ITFORSO}).

Recall that the zeta and eta functions of $P$ are obtained
as meromorphic continuations from the half-plane $\Re s>\frac{n}{m}$ to the whole complex plane of the functions,
\begin{equation*}
    \zeta(P;s) :=\sum_{\lambda \in \op{Sp} P\setminus \{0\}} \lambda^{-s} \qquad \text{and} \qquad
    \eta(P;s):=\sum_{\lambda \in \op{Sp} P\setminus \{0\}} \op{sign}(\lambda) |\lambda|^{-s},
\end{equation*}where each eigenvalue is repeated according to multiplicity and $\lambda^{-s}$ is defined in a
suitable way (there are actually several possible zeta functions; see~Section~\ref{sec.EFNCR}).

 General considerations show that the possible singularities of the meromorphic functions $\zeta(P;s)$ and $\eta(P;s)$ may only be simple
 poles contained in the admissible set,
 \begin{equation}
     \Sigma:= \biggl\{\frac{k}{m}; \ k \in \Z, \ k\leq n\biggr\}\setminus \left\{0\right\}.
     \label{eq:Intro.admissible-set}
\end{equation}
It then is natural to raise the following questions:
\begin{enumerate}
    \item[-] Given any point of the admissible set $\Sigma$, is there always an operator whose zeta or eta
    function is singular at that point?
    \item[-] Is there an operator whose zeta and eta functions are singular at  \emph{all} the points of the admissible set $\Sigma$?
    \item[-] If these phenomena do occur, how ``generic'' are they?
\end{enumerate}

In fact, if we endow the space $\Psi^{m}(M,E)$ with its standard Fr\'echet space topology, then we may expect that,
\emph{generically}, the zeta and eta function are singular at \emph{all} the points of the admissible set $\Sigma$.
To see this,  let us denote by $\Psi^{m}(M,E)^{\text{sa}}$ the real vector space of selfadjoint elements
of $\Psi^{m}(M,E)$ equipped with the induced topology, and consider a small open neighborhood $\cW$ of the origin in
$\Psi^{m}(M,E)^{\text{sa}}$ such that $P+R$ is elliptic for all $R\in \cW$. For $\sigma \in \Sigma$
define
\begin{equation*}
    \cW_{\sigma}:=\left\{R\in \cW; \ \Res_{s=\sigma}\zeta(P;s)\neq 0 \ \text{and} \ \Res_{s=\sigma}\eta(P+R;s)\neq 0\right\}.
\end{equation*}

Because the conditions $\Res_{s=\sigma}\zeta(P;s)\neq 0$ and
$\Res_{s=\sigma}\eta(P+R;s)\neq 0$ can be shown to be open conditions and reduce to simple conditions on the homogeneous
 components of the symbol of $R$ (if, say, $R$ is supported in a local chart), we may certainly expect
$\cW_{\sigma}$ to be a dense open subset of $\cW$. The Baire category theorem would then ensure
 us that $ \bigcap_{\sigma \in \Sigma}\cW_{\sigma}$ is dense, so that we could find an arbitrary small
 selfadjoint perturbation of $P$ that makes the zeta and eta functions become singular at all points of the
 admissible set $\Sigma$.

 In the light of this, one would like to have \emph{explicit} constructions of such perturbations, not only in the category of pseudodifferential
 operators, but also by restricting ourselves to the smaller categories of differential operators and Dirac-type operators.
 The aim of this note is to exhibit those types of perturbations within the following classes of operators:
 \begin{enumerate}
     \item[(i)] First order selfadjoint elliptic differential operators (see~Proposition~\ref{prop:1stDO.perturbation}).

     \item[(ii)] Dirac-type operators  (see~Corollary~\ref{cor:Dirac.generic-sing}).

     \item[(iii)] Selfadjoint elliptic pseudo-differential operators 
     (see Proposition~\ref{prop:PsiDOs1-sing-eta} in the first-order case and
     Proposition~\ref{prop:higherPsiDOs-sing-eta1} in the general case).
 \end{enumerate}

The proofs of those results are based on straightforward considerations on the noncommutative residue
trace of Guillemin~\cite{Gu:NPWF} and Wodzicki~\cite{Wo:NCRF}. This trace provides us with a neat and unified treatment of the
residues of the zeta and eta functions.

The result for the category (ii) of Dirac operators is an immediate by-product of the results for the category (i).
In the odd-dimensional case this provides us with an alternative proof of a
well-known result of Branson-Gilkey~\cite[Theorem 4.3]{BG:REFODT} about the genericity of the singularities of the eta function of a
Dirac-type operator at positive odd integers~$\leq n$ in even dimension and at all non-zero even integers~$\leq n$ in odd 
dimension (those are the only points at which the eta function may have singularities; see below).  

While Branson-Gilkey's result was obtained by making use of Riemannian invariant theory, and 
hence was specific to Dirac-type operators, our 
argument is purely analytic and applies to general first order selfadjoint elliptic differential operators. It would be interesting 
to obtain the genericity result of Branson-Gilkey for  general first order selfadjoint 
elliptic differential operators in odd dimension by purely analytical means. 

As it turns out,  when $P$ is a first order selfadjoint elliptic differential operator, the admissible sets at which the
zeta and eta functions is much smaller than $\Sigma$ (see Section~\ref{sec:1stDOs}). In particular, they are subsets of the set of positive
integers~$\leq n$.  As a result, for such an operator, it is enough to use the
simplest possible perturbations, namely, perturbations by real constants,
\begin{equation*}
   P+a, \qquad a \in\R.
\end{equation*}

In order to deal with first order \psidos\ the perturbations by constants are too crude, so we need to refine them. We consider
perturbations of the form,
\begin{equation}
   P_{\epsilon}:=P+\epsilon |P|, \qquad P_{\epsilon,c}:=P_{\epsilon}+cF(P)|P_{\epsilon}|^{-n},
   \label{eq:Intro.Pec}
\end{equation}where $\epsilon\in (-1,1)$ and $c\geq 0$ are small enough, and we have denoted by $|P|$ the absolute value of $P$ and by
$F(P)=P|P|^{-1}$ its sign operator.

The results when $P$ is a \psido\ of order~$m>1$ follow from for the first order case by considering perturbations of the form,
\begin{equation*}
    P_{\epsilon,c,a}=F(Q_{\epsilon,c}+a)|Q_{\epsilon,c}+a|^{m},
\end{equation*}where $a$, $c$ and $\epsilon$ are as above and $Q_{\epsilon,c}$ is defined as in~(\ref{eq:Intro.Pec}) by
substituting the first order operator $Q:=F(P)|P|^{\frac{1}{m}}$ for $P$.

This note is organized as follows. In Section~\ref{sec.EFNCR}, we recall some background on  the noncommutative residue
 trace and  the zeta and eta functions of an elliptic operator.  In Section~\ref{sec:1stDOs}, we prove the results for first order differential
 operators and Dirac-type operators. In Section~\ref{sec:PDOs1}, we deal with first-order pseudo-differential operators. In
 Section~\ref{sec:higher-orderPsiDOs}, we derive the results for higher order pseudo-differential operators.

  In addition, two appendices are included. In Appendix~\ref{sec:Topology-PsiDOs}, for the reader's convenience, we recall
  the construction of the standard Fr\'echet space topologies on the spaces of \psidos\ of given orders.
 In Appendix~\ref{sec:eta-odd-class}, we include corrections to Theorem~6.3 of~\cite{Po:SAZFNCR},
 which misstates that if $P$ is odd-class and its order has opposite
parity to $\dim M$, then its eta function is regular at \emph{all} integer points. As we shall explain, this property does
hold true in odd dimension, but in even dimension we can only obtain regularity at \emph{even} integer points
(cf.~Theorem~\ref{thm:odd-class.new}).

\begin{acknowledgements}
  The authors wish to thank Peter Gilkey for fruitful discussions regarding the subject matter of this paper.
\end{acknowledgements}

\noindent \emph{Notation.} Throughout this paper we let $M$ be a compact Riemannian manifold of dimension $n$ and $E$ be a Hermitian vector bundle
over $M$ of rank $r$.
The Riemannian metric of $M$ and the Hermitian metric of $E$ then allow us to define an inner product on $L^{2}(M,E)$ with
respect to which we define adjoints.

In addition, for $q \in \C$, we denote by $\Psi^{q}(M,E)$ the space of (classical) \psidos\ of order $q$
acting on the sections of $E$. We shall endow $\Psi^{q}(M,E)$ with its standard Fr\'echet space topology, whose definition is recalled in
Appendix~\ref{sec:Topology-PsiDOs}.

\section{The noncommutative residue and the eta and zeta functions}\label{sec.EFNCR}
In this section, we recall the basic definitions and properties of the noncommutative residue trace
and the zeta and eta functions of an elliptic operator.

 Define
 \begin{gather*}
     \pdoi(M,E):=\bigcup_{\Re  q<-n}\Psi^{q}(M,E) , \\
  \pdoz(M,E):=\bigcup_{q\in \Z}\Psi^{q}(M,E), \qquad    \pdocz(M,E):=\bigcup_{q \in \CZ} \Psi^{q}(M,E).
 \end{gather*}Notice that, as $M$ is compact, the class $\pdoz(M,E)$ is an algebra.

 Any operator $P\in
 \pdoi(M,E)$ is trace-class. Moreover, its Schwartz kernel $k_{P}(x,y)$ is continuous and the restriction to the diagonal
 $k_{P}(x,x)$ gives rise to a smooth $\End E$-valued density, i.e., a smooth section of the bundle
 $|\Lambda|(M)\otimes \End E$, where we denote by $|\Lambda|(M)$ the line bundle of densities over $M$.  Therefore,
 \begin{equation*}
     \Tra( P) =\int_{M} \tr_{E_{x}}k_{P}(x,x).
 \end{equation*}

 The map $P\rightarrow k_{P}(x,x)$ is holomorphic from $\pdoi(M,E)$ to the Fr\'echet space
 $C^{\infty}(M,|\Lambda|(M)\otimes \End E)$, where holomorphy is meant with respect to the notion of
 holomorphic families of \psidos\ as defined in~\cite{Gu:RTCAFIO} and~\cite{Ok:CHTEORTF}. Namely, if $(P(z))_{z\in \C}$
 is any holomorphic family of \psidos\ with values in $\pdoi(M,E)$, then $z \rightarrow k_{P}(x,x)$ is a holomorphic map
 from $\C$ to $C^{\infty}(M,|\Lambda|(M)\otimes \End E)$.

 As observed by Guillemin~\cite{Gu:GLD} and Wodzicki~\cite{Wo:NCRF} (see also~\cite{KV:GDEO}),
 the map  $P\rightarrow
 k_{P}(x,x)$ has a unique holomorphic extension as a
  map $P\rightarrow t_{P}(x)$ from $\pdocz(M,E)$ to
 $C^{\infty}(M,|\Lambda|(M)\otimes \End E)$.  Therefore,
 the operator trace can be uniquely extended into the holomorphic functional on $\pdocz(M,E)$
defined by
 \begin{equation*}
     \TR P :=\int_{M}\tr_{E_{x}}t_{P}(x) \qquad \forall P\in \pdocz(M,E).
 \end{equation*}

 In addition, the functional $\TR$ induces on the algebra $\pdoz(M,E)$ the noncommutative residue trace of
Guillemin~\cite{Gu:NPWF} and Wodzicki~\cite{Wo:NCRF} as follows.

Let $P\in \pdoz(M,E)$ and, given some local coordinates and trivialization of $E$, let $p(x,\xi)\sim \sum_{j\geq
0}p_{m-j}(x,\xi)$ be the symbol of $P$ with respect to these local coordinates and trivialization. Define
\begin{equation}
    c_{P}(x)=(2\pi)^{-n}\int_{S^{n-1}}p_{-n}(x,\xi)d^{n-1}\xi,
    \label{eq:Background.residue-density}
\end{equation}where $p_{-n}(x,\xi)$ is the symbol of degree $-n$ of $P$ and $d^{n-1}\xi$ denotes the surface measure of
the sphere $S^{n-1}$.

At first glance, the function $c_{P}(x)$ depends on the choice of the local coordinates and
trivialization. However, it can be shown that if we interpret it as a density, then it makes sense intrinsically on $M$
as a smooth $\End E$-valued density, i.e., as an element of $C^{\infty}(M,|\Lambda|(M)\otimes \End E)$.

Furthermore, if $P\in \pdoz(M,E)$ and $(P(z))_{z\in \C}$ is a holomorphic family of \psidos\ such that $P(0)=P$ and $\ord P(z)=z+\ord P$,
then, near $z=0$, the map $z \rightarrow t_{P(z)}(x)$ has at worst a simple pole singularity such that
\begin{equation*}
    \Res_{z=0}t_{P(z)}(x)=-c_{P}(x).
\end{equation*}

The \emph{noncommutative residue} is the linear functional  on $\pdoz(M,E)$ defined by
\begin{equation}
    \Res P:=\int_{M}\tr_{E_{x}}c_{P}(x) \qquad \forall P\in \pdoz(M,E).
     \label{eq:Background.NCR}
\end{equation}

\begin{proposition}[\cite{Gu:RTCAFIO}, \cite{Wo:NCRF}] \label{prop:Background.NCR}
    The noncommutative residue has the following properties:
    \begin{enumerate}
        \item  It vanishes on $\Psi^{-(n+1)}(M,E)$ and on all differential operators.
        In particular, it vanishes on all smoothing
        operators.\smallskip

        \item  It is a trace on the algebra $\pdoz(M,E)$, i.e.,
        \begin{equation*}
            \Res \left[P_{1}P_{2}\right]=\Res\left[ P_{2}P_{1}\right] \qquad \forall P_{j}\in \pdoz(M,E).
        \end{equation*}

        \item  Let $P\in \pdoz(M,E)$ and let $(P(z))_{z\in \C}$ be a holomorphic family of \psidos\ such that $P(0)=P$
        and $\ord P(z)=z+\ord P$. Then, near $z=0$, the map $z \rightarrow \TR P(z)$ has at worst a simple pole
        singularity such that
        \begin{equation*}
                     \Res_{z=0} \TR P(z)=-\Res P.
\end{equation*}
    \end{enumerate}
\end{proposition}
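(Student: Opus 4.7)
The plan is to dispatch the three properties separately, relying on the local residue formula recalled immediately before the proposition and on a standard analytic-continuation trick for the trace property.

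For part (1), I would just inspect the defining formula~(\ref{eq:Background.residue-density}). If $P\in \Psi^{-(n+1)}(M,E)$, its classical symbol has homogeneous components of degrees $-(n+1), -(n+2),\ldots$, so the degree $-n$ component vanishes identically, hence $c_{P}(x)\equiv 0$ and $\Res P=0$. If $P$ is a differential operator of order $m\geq 0$, its symbol is polynomial in $\xi$ with components only in integer degrees $0,1,\ldots,m$; since $n\geq 1$, again $p_{-n}\equiv 0$, so $c_{P}(x)\equiv 0$. Part (3) is essentially a restatement: the local formula $\Res_{z=0}t_{P(z)}(x)=-c_{P}(x)$ was already recorded in the excerpt; since the local pole structure is uniform in $x\in M$, we may interchange $\Res_{z=0}$ with integration against the smooth section $\tr_{E_{x}}$, and the claim follows from the definitions of $\TR$ and $\Res$.

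The substance of the proposition is the trace property (2). My approach is the standard gauging argument. Fix a positive selfadjoint elliptic \psido\ $\Delta$ of order~$1$ and form its complex powers to obtain a holomorphic family $Q(z)\in \Psi^{z}(M,E)$ with $Q(0)=I$ and $\ord Q(z)=z$. Given $P_{1},P_{2}\in \pdoz(M,E)$, the families
\begin{equation*}
  z\longmapsto P_{1}Q(z)P_{2}, \qquad z\longmapsto P_{2}P_{1}Q(z)
\end{equation*}
are both holomorphic of order $z+\ord P_{1}+\ord P_{2}$, and both equal $P_{1}P_{2}$, respectively $P_{2}P_{1}$, at $z=0$. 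For $\Re z$ sufficiently negative, both operators are trace-class and the ordinary operator trace satisfies the cyclicity identity $\Tra[P_{1}Q(z)P_{2}]=\Tra[P_{2}P_{1}Q(z)]$. By uniqueness of the holomorphic extension of the trace to $\pdocz(M,E)$, this identity propagates to
\begin{equation*}
  \TR\bigl[P_{1}Q(z)P_{2}\bigr]=\TR\bigl[P_{2}P_{1}Q(z)\bigr] \qquad \forall z\in \CZ.
\end{equation*}
Taking residues at $z=0$ on both sides and invoking part (3) yields $-\Res[P_{1}P_{2}]=-\Res[P_{2}P_{1}]$, which is the desired trace property.

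The main obstacle is (2), and within it the only non-trivial input is the existence of a holomorphic gauging family $Q(z)$; this is classical (Seeley's complex powers), but it is what powers the whole argument. Once that is in hand, the proof reduces to the trivial cyclicity of the operator trace on trace-class operators, combined with the uniqueness of analytic continuation—so, from this vantage point, properties (2) and (3) are two manifestations of the same analytic-continuation principle, while (1) is a direct consequence of the dimensional count for homogeneous symbol components.
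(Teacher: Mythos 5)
The paper does not prove this proposition; it is cited verbatim from Guillemin \cite{Gu:RTCAFIO} and Wodzicki \cite{Wo:NCRF}, so there is no internal argument to compare against. That said, your reconstruction is correct and follows what is essentially Guillemin's original gauging argument. Parts (1) and (3) are exactly the dimensional count on homogeneous components and the integration of the local residue formula, as you say. For part (2), the complex-power gauging followed by cyclicity of the ordinary trace and uniqueness of the meromorphic extension is the standard and correct route. Two small technical points worth making explicit if you write this up: first, the identity $\Tra[P_{1}Q(z)P_{2}]=\Tra[P_{2}P_{1}Q(z)]$ for $\Re z$ sufficiently negative is itself a fact requiring an argument, since the individual factors may be unbounded; the usual justification is to compute both traces by integrating the Schwartz kernel over the diagonal and applying Fubini, which is legitimate once the total order is below $-n$. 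Second, to ensure $Q(0)=I$ on the nose (rather than $I-\Pi_{0}(\Delta)$), you should take $\Delta$ positive \emph{and invertible}, e.g.\ $1+\Delta_{g}$ raised to the power $1/2$, so that $\Delta^{0}=I$ with no harmonic projection left over; otherwise $Q(0)$ differs from the identity by a smoothing operator, which is harmless for the residue computation but is cleaner to avoid.
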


In addition, if $M$ is connected, then a result of Wodzicki~\cite{Wo:PhD} (see also~\cite{LP:UMDEPDO}, \cite{Po:TPDOSC})
states that the noncommutative residue is the unique trace up to constant multiples on the algebra
$\pdoz(M,E)$.

Next, let $P:C^{\infty}(M,E)\rightarrow C^{\infty}(M,E)$ be a selfadjoint elliptic \psido\ of order $m$, $m \in \N$.
Considered as an
unbounded operator on $L^{2}(M,E)$ with domain the Sobolev space $L^{2}_{m}(M,E)$, the operator $P$ is closed,
selfadjoint and has a compact resolvent. Therefore, the spectral decomposition of $P$ allows us to define a (Borel) functional
calculus for $P$. In particular, for any $s \in \C$, this enables us to define the complex powers $P_{\up}^{s}$ and
$P_{\down}^{s}$ by functional calculus associated to the functions,
\begin{gather*}
    \lambda_{\up}^{s}:=e^{is\arg_{\up}s}, \qquad  \lambda \in \C\setminus i[0,\infty),   \\
     \lambda_{\down}^{s}:=e^{is\arg_{\down}s}, \qquad  \lambda \in \C\setminus -i[0,\infty),
\end{gather*}
where $\arg_{\up}$ (resp., $\arg_{\down}$) is the continuous determination of the argument with values in
$(-\frac{3\pi}{2},\frac{\pi}{2})$ (resp., $(-\frac{\pi}{2},\frac{3\pi}{2})$). In particular,
\begin{equation}
    P^{0}_{\updown}=1-\Pi_{0}(P), \qquad P_{\updown}^{k}=P^{k} \quad \forall k \in \Z\setminus 0,
    \label{eq:NCR.Pudpdown-integers}
\end{equation}where $\Pi_{0}(P)$ is the orthogonal projection onto $\ker P$.
It follows from the results of Seeley~\cite{Se:CPEO} that $P_{\up}^{s}$ and $P_{\down}^{s}$ are \psidos\ of order
$ms$ and the families $(P_{\up}^{s})_{s \in \C}$ and $(P_{\down}^{s})_{s \in \C}$ are holomorphic families of 
\psidos~(see also~\cite{Ok:CHTEORTF} concerning the holomorphy of these families).  

The zeta functions of $P$ are the meromorphic functions on $\C$
defined by
\begin{equation*}
    \zetaup(P;s):=\TR P_{\updown}^{-s} \qquad \text{and} \qquad  \zetadown(P;s):=\TR P_{\updown}^{-s}.
\end{equation*}
In particular, for $\Re s>\frac{n}{m}$,
\begin{equation*}
    \zetaupdown(P;s)=\Tr  P_{\updown}^{-s} =\sum_{\lambda \in \op{Sp}(P)\setminus 0} \lambda_{\updown}^{-s},
\end{equation*}where each non-zero eigenvalue of $P$ is repeated according to multiplicity.

It follows from Proposition~\ref{prop:Background.NCR} that the functions $\zetaupdown(P;s)$ are analytic outside
\begin{equation}
    \Sigma:=\biggl\{\frac{k}{m}; \ k\in \Z, \ k\leq n \biggr\}\setminus \{0\}.
    \label{eq:Non-generic.singular-set}
\end{equation}
Moreover, near any $\sigma\in \Sigma$,  there is at worst a simple pole singularity such that
        \begin{equation}
            m .\Res_{s=\sigma}\zeta_{\updown}(P;s)= \Res P_{\updown}^{-\sigma}.
             \label{eq:Background.Residues-zeta}
         \end{equation}
Notice that at $s=0$ we have
\begin{equation*}
    m .\Res_{s=0}\zeta_{\updown}(P;s)= \Res P_{\updown}^{0}=\Res \left[ 1-\Pi_{0}(P)\right]=0,
\end{equation*}since the noncommutative residue vanishes on differential and smoothing operators.

When $P$ is positive, i.e., its spectrum is contained in $[0,\infty)$, the zeta functions $\zetaup(P;s)$ and
$\zetadown(P;s)$ agree, so in that case we shall drop the subscripts $\updown$ and use the single notation
$\zeta(P;s)$.  For instance, the absolute value $|P|:=\sqrt{P^{2}}$ is a positive selfadjoint elliptic \psido\ of order $m$. Thus, for this operator, we have a single
zeta function,
\begin{equation*}
    \zeta(|P|;s):=\TR |P|^{-s}.
\end{equation*}Notice that some authors refer to $\zeta(|P|;s)$ as the zeta function of $P$ (see, e.g.,~\cite{Gi:ITHEASIT}).

Like the other zeta functions $\zetaupdown(P;s)$, the function $\zeta(|P|;s)$ is analytic on $\C\setminus \Sigma$ and it has
at worst simple pole singularities on $\Sigma$ such that
        \begin{equation}
            m .\Res_{s=\sigma}\zeta(|P|;s)= \Res |P|^{-\sigma} \qquad \forall \sigma \in \Sigma.
             \label{eq:Background.Residues-zeta|P|}
         \end{equation}

The eta function of a selfadjoint elliptic \psido\ was introduced by Atiyah-Patodi-Singer~\cite{APS:SARG1} as a tool to
study the spectral asymmetry of such an operator. It can be defined as follows.

Let $F:=P|P|^{-1}$ be the sign operator of $P$. This is a selfadjoint elliptic \psido\ of order zero such that
\begin{equation}
    F^{2}=1-\Pi_{0}(P).
    \label{eq:NCR.squareF}
\end{equation}
The eta function of $P$ is the meromorphic function on $\C$ defined by
\begin{equation*}
    \eta(P;s):=\TR \left[F|P|^{-s}\right].
\end{equation*}In particular, for $\Re s>\frac{n}{m}$,
 \begin{equation*}
     \eta(P;s)=\Tr \left[F|P|^{-s}\right]= \sum_{\lambda \in \op{Sp}(P)\setminus 0} \op{sign}(\lambda)|\lambda|^{-s},
 \end{equation*}where each non-zero eigenvalue of $P$ is repeated according to multiplicity.

Like the zeta functions, the eta function is analytic on $\C\setminus \Sigma$ and  has
at worst simple pole singularities on $\Sigma$ such that
        \begin{equation}
             m.\Res_{s=\sigma}\eta(P;s)=\Res \left[F|P|^{-\sigma}\right] \qquad \forall \sigma \in \Sigma.
             \label{eq:Background.Residues-eta}
         \end{equation}
Notice that at $s=0$ we  have
\begin{equation*}
   m . \Res_{s=\sigma}\eta(P;s)= \Res \left[F|P|^{0}\right] =\Res F.
\end{equation*}However, important results of Atiyah-Patodi-Singer~\cite{APS:SARG3} and Gilkey~\cite{Gi:RGEFO} ensure us
that $\Res F$ is zero, so that $\eta(P;s)$ is always regular at the origin.

\section{First-order differential operators}\label{sec:1stDOs}
In this section, we shall look at the singularities of the eta and zeta functions of  first-order elliptic differential operators and
Dirac-type operators. The main results of this section will be consequences of
elementary lemmas, which we shall state for pseudodifferential operators since we will need to use of them in the
next sections.

Let $P:C^{\infty}(M,E)\rightarrow C^{\infty}(M,E)$ be a selfadjoint elliptic first-order \psido\ and let us look at the
effects of the simplest possible perturbations, namely, perturbations by constants,
  \begin{equation*}
    P+a, \qquad a \in \R.
 \end{equation*}The operator $P+a$ is a selfadjoint elliptic first-order \psido\ with the same principal symbol
 as $P$.

In the sequel, given $Q \in \Psi^{q}(M,E)$, $q \in \C$, and $Q_{j}\in \Psi^{q-j}(M,E)$, $j=0,1,\cdots$, we
shall write
\begin{equation*}
    Q\simeq \sum_{j\geq 0} Q_{j}
\end{equation*}to mean that
\begin{equation*}
    Q-\sum_{j<N}Q_{j}\in \Psi^{q-N}(M,E) \qquad \forall N\in \N.
\end{equation*}

\begin{lemma}\label{lem:Non-generic.Binomial}
   Let $a \in \R$.  Then, for all $k \in \N$,
    \begin{equation}
        (P+a)^{-k}\simeq \sum_{j\geq 0} \binom{-k}{j} a^{j}P^{-(k+j)}.
        \label{eq:Non-generic.Binomial}
    \end{equation}
\end{lemma}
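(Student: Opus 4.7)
The plan is to reduce the statement to a generalized binomial expansion in the asymptotic algebra of $\Psi$DOs modulo smoothing. Formally, $(P+a)^{-k} = P^{-k}(1+aP^{-1})^{-k}$, and since $aP^{-1}$ has order $-1$, a Neumann-type expansion should produce exactly the claimed series term by term; the asymptotic relation $\simeq$ is compatible with the $\Psi$DO filtration and with multiplication, so once each remainder is under control, one obtains the full expansion.

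The case $k=1$ follows from the resolvent-type identity
\[
(P+a)^{-1} = P^{-1} - aP^{-1}(P+a)^{-1},
\]
which one verifies by left-multiplying by $P+a$ and which holds modulo a finite-rank smoothing contribution from $\Pi_{0}(P)$. Iterating this identity $N$ times gives, modulo smoothing operators,
\[
(P+a)^{-1} = \sum_{j=0}^{N-1}(-a)^{j}P^{-(j+1)} + (-a)^{N}P^{-N}(P+a)^{-1},
\]
with the remainder lying in $\Psi^{-N-1}(M,E)$. Since $\binom{-1}{j}=(-1)^{j}$, this is exactly (\ref{eq:Non-generic.Binomial}) for $k=1$. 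For general $k$, I would argue by induction via the identity $(P+a)^{-(k+1)}=(P+a)^{-1}(P+a)^{-k}$, multiplying the two asymptotic expansions termwise; the coefficient of $a^{n}P^{-(k+1+n)}$ becomes $\sum_{i+j=n}\binom{-1}{i}\binom{-k}{j}$, which by Vandermonde's identity is precisely $\binom{-(k+1)}{n}$. An equally clean alternative is to differentiate the $k=1$ formula $(k-1)$ times in the real parameter $a$, using $(P+a)^{-k}=\tfrac{(-1)^{k-1}}{(k-1)!}\partial_{a}^{k-1}(P+a)^{-1}$, and then re-indexing.

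The main bookkeeping obstacle is verifying that at each step the remainder has the advertised order and that the finite-dimensional kernels of $P$ and $P+a$ contribute only smoothing terms, so that $P^{-1}$ and $(P+a)^{-1}$ can be treated as genuine $\Psi$DOs of order $-1$, as guaranteed by Seeley's theorem. None of this is difficult once the conventions are fixed, and the induction via Vandermonde appears to be the shortest route.
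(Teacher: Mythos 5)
Your proof is correct, but it takes a genuinely different route from the paper. The paper's argument is non-inductive: it factors $P+a = P(1+u)$ with $u = aP^{-1}$ modulo smoothing, writes $(P+a)^{-1} = P^{-1}\sum_{j<N}(-u)^j$ modulo $\Psi^{-1-N}$, raises this to the $k$-th power to get $(P+a)^{-k} = P^{-k}\bigl(\sum_{j<N}(-u)^j\bigr)^k$ modulo $\Psi^{-k-N}$, and then invokes a polynomial bookkeeping step to show that $\bigl(\sum_{j<N}(-x)^j\bigr)^k$ agrees with $\sum_{j<N}\binom{-k}{j}x^j$ modulo $\op{Span}\{x^N,\dots,x^{k(N-1)}\}$. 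You instead establish only the $k=1$ case (by iterating the resolvent-type identity, which is essentially the same Neumann expansion) and then close the argument by induction on $k$, using the product $(P+a)^{-(k+1)}=(P+a)^{-1}(P+a)^{-k}$ together with the Chu--Vandermonde convolution $\sum_{i+j=n}\binom{-1}{i}\binom{-k}{j}=\binom{-(k+1)}{n}$; your alternative via $\partial_a^{k-1}(P+a)^{-1}$ is also valid since all coefficients are polynomial in $a$. The advantage of your route is that it replaces the paper's slightly delicate argument about which monomials the error polynomial spans with a single, clean combinatorial identity, at the cost of carrying out an induction. Both proofs rely on the same two structural facts: that $P$, $P^{-1}$, and $a$ all commute (so asymptotic expansions may be multiplied termwise), and that $\Pi_0(P)$ and $\Pi_0(P+a)$ contribute only smoothing errors, which you correctly flag. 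Minor point of notation: the resolvent identity is most directly verified by right-multiplying by $P+a$ (so that $(P+a)^{-1}(P+a)=1-\Pi_0(P+a)$ appears); left-multiplication also works here but only because everything in sight commutes, and it is worth saying so explicitly.
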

\begin{proof} Let $\Pi_{0}(P)$ be the orthogonal projection onto $\ker P$. As $\Pi_{0}(P)$ is a smoothing operator, we
    have
   \begin{equation*}
       P+a=P(1+aP^{-1})+a\Pi_{0}(P)=P(1+aP^{-1}) \quad \bmod \Psi^{-\infty}(M,E).
   \end{equation*}
   Let $N\in \N$ and set $u=aP^{-1}$. Then $u$ is contained in $\Psi^{-1}(M,E)$ and
   \begin{equation*}
       (1+u).\sum_{0\leq j<N}(-u)^{j}=1-(-u)^{N}=1 \quad \bmod \Psi^{-N}(M,E).
   \end{equation*}Thus,
   \begin{equation*}
       (P+a)\sum_{0\leq j<N}(-u)^{j}=P(1+u).\sum_{0\leq j<N}(-u)^{j}=P \quad \bmod \Psi^{1-N}(M,E).
   \end{equation*}Multiplying these equalities by $P^{-1}(P+a)^{-1}$ then gives
   \begin{equation}
       (P+a)^{-1}= P^{-1}.\sum_{0\leq j<N}(-u)^{j}\quad \bmod \Psi^{-1-N}(M,E).
        \label{eq:Non-generic.Pa-1.P-1}
   \end{equation}

   Let $k \in \N$. From~(\ref{eq:Non-generic.Pa-1.P-1}) we get
   \begin{equation}
       (P+a)^{-k}=P^{-k}. \biggl( \sum_{0\leq j<N}(-u)^{j}\biggr)^{k}\quad \bmod \Psi^{-k-N}(M,E).
       \label{eq:Non-generic.Pa-k.P-k}
   \end{equation}
   Observe that, near $x = 0$,
   \begin{equation*}
       \biggl( \sum_{0\leq j<N}(-x)^{j}\biggr)^{k}=\left( 1-(-x)^{N}\right)^{k}(1+x)^{-k} = \sum_{0\leq
       j<N}\binom{-k}{j}x^{j} +\op{O}(x^{N}).
   \end{equation*}Since $ \biggl( \sum_{0\leq j<N}(-x)^{j}\biggr)^{k}$ is a polynomial, we deduce that
   \begin{equation*}
        \biggl( \sum_{0\leq j<N}(-x)^{j}\biggr)^{k}=\sum_{0\leq
       j<N}\binom{-k}{j}x^{j} \quad \bmod  \op{Span}\left\{ x^{N},\cdots x^{k(N-1)}\right\}.
   \end{equation*}It then follows that
   \begin{align*}
       \biggl( \sum_{0\leq j<N}(-u)^{j}\biggr)^{k} &= \sum_{0\leq
       j<N}\binom{-k}{j}u^{j} \quad \bmod  \op{Span}\left\{ u^{N},\cdots u^{k(N-1)}\right\}\\
       &= \sum_{0\leq
       j<N}\binom{-k}{j}a^{j}P^{-j} \quad \bmod \Psi^{-N}(M,E).
   \end{align*}
   Combining this with~(\ref{eq:Non-generic.Pa-k.P-k}) we get
   \begin{equation*}
       (P+a)^{-k}= \sum_{0\leq
       j<N}\binom{-k}{j}a^{j}P^{-(k+j)}\quad  \bmod \Psi^{-k-N}(M,E),
   \end{equation*}which proves~(\ref{eq:Non-generic.Binomial}). The proof is
   complete.
\end{proof}

\begin{lemma}\label{lem:DiffO.Res|P|n}
    The noncommutative residue $\Res |P|^{-n}$ is always~$>0$.
\end{lemma}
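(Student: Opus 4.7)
The plan is to show that the residue density $c_{|P|^{-n}}(x)$ is, pointwise in $x$, a positive-definite endomorphism of $E_{x}$. Once this is established, taking the fiberwise trace and integrating over the compact manifold $M$ immediately yields $\Res|P|^{-n}>0$ by the definition~(\ref{eq:Background.NCR}).

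First I would identify the principal symbol of $|P|^{-n}$. Let $p(x,\xi)$ denote the principal symbol of $P$, which is positively homogeneous of degree $1$ in $\xi$. Since $P$ is selfadjoint and elliptic of first order, $p(x,\xi)$ is, at every $(x,\xi)$ with $\xi\neq 0$, a selfadjoint invertible endomorphism of $E_{x}$. By Seeley's construction of complex powers (recalled in Section~\ref{sec.EFNCR}), $|P|^{-n}=(P^{2})^{-n/2}$ is, modulo smoothing, a classical \psido\ of order exactly $-n$ whose principal symbol equals $(p(x,\xi)^{2})^{-n/2}=|p(x,\xi)|^{-n}$, i.e.~the $(-n)$-th power of the positive-definite endomorphism $|p(x,\xi)|$. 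In particular, $|p(x,\xi)|^{-n}$ is itself a positive-definite endomorphism of $E_{x}$ for every $\xi\neq 0$.

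Because $|P|^{-n}$ has order exactly $-n$, the degree-$(-n)$ homogeneous component of its symbol coincides with this principal symbol, so formula~(\ref{eq:Background.residue-density}) gives, in any local coordinates and local trivialization of $E$,
\begin{equation*}
    c_{|P|^{-n}}(x)=(2\pi)^{-n}\int_{S^{n-1}}|p(x,\xi)|^{-n}\,d^{n-1}\xi.
\end{equation*}
The right-hand side is the integral of a continuous family of positive-definite matrices against the positive surface measure on $S^{n-1}$, and is therefore itself a positive-definite endomorphism of $E_{x}$. Consequently $\tr_{E_{x}}c_{|P|^{-n}}(x)$ is a strictly positive smooth scalar density on $M$, whose integral $\Res|P|^{-n}$ is strictly positive.

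The only delicate point will be the clean identification of the degree-$(-n)$ symbol component of $|P|^{-n}$ with $|p(x,\xi)|^{-n}$, which is a standard consequence of Seeley's theorem applied to the positive elliptic operator $P^{2}$; modulo that input the proof reduces entirely to the elementary pointwise positivity observation sketched above, and no cancellation from lower-order symbols can occur because the degree-$(-n)$ component here \emph{is} the leading symbol.
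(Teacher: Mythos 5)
Your proposal is correct and follows essentially the same argument as the paper: identify the degree-$(-n)$ symbol component of $|P|^{-n}$ with the positive-definite matrix $|p_{1}(x,\xi)|^{-n}$, conclude that $c_{|P|^{-n}}(x)$ is pointwise positive-definite via~(\ref{eq:Background.residue-density}), and integrate the positive scalar density $\tr_{E_{x}}c_{|P|^{-n}}(x)$ over $M$. The paper states this more tersely, but the reasoning is identical.
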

\begin{proof}
Let $p_{1}(x,\xi)$ be the principal symbol of $P$. Then $|P|^{-n}$ is a \psido\ of order $-n$ with principal symbol
 $|p_{1}(x,\xi)|^{-n}$. This symbol is positive-definite everywhere, so the density $c_{|P|^{-n}}(x)$ defined
 by~(\ref{eq:Background.residue-density}) takes values in positive-definite sections of $\End E$, and hence the scalar density
 $\tr_{E}c_{|P|^{-n}}(x)$ is~$>0$. Combining this with~(\ref{eq:Background.NCR}) shows that $\Res |P|^{-n}$ is~$>0$,
 proving the lemma.
\end{proof}

In the sequel, we let $F:=P|P|^{-1}$ be the sign of $P$. We observe that~(\ref{eq:NCR.squareF}) implies that, for all $k \in \Z$,
\begin{equation}
    F^{k}= \left\{
    \begin{array}{ll}
        F & \text{if $k$ is odd},  \\
        1-\Pi_{0}(P)=1 \bmod \Psi^{-\infty}(M,E) &  \text{if $k$ is even.}
    \end{array}\right.
 \label{eq:DiffO-signPk}
\end{equation}

\begin{lemma}\label{lem:DiffO.signP+a}
Let  $a \in \R$ and denote by $F_{a}$ the sign of $P+a$. Then
    \begin{equation}
        F_{a}=F \quad \bmod \Psi^{-\infty}(M,E).
        \label{eq:DiffO.sign-P+a}
    \end{equation}
\end{lemma}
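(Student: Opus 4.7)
The plan is to prove the statement by Borel functional calculus for $P$, exploiting that $P$ has discrete real spectrum of finite multiplicity with $|\lambda|\to\infty$. Introduce the functions $f(\lambda):=\op{sign}(\lambda)$ and $f_{a}(\lambda):=\op{sign}(\lambda+a)$, both with the convention $\op{sign}(0):=0$. With the conventions for $|\cdot|^{-1}$ used in Section~\ref{sec.EFNCR} (whereby $|P|^{-1}$ annihilates $\ker P$, and similarly $|P+a|^{-1}$ annihilates $\ker(P+a)$), the spectral theorem identifies $F=f(P)$ and $F_{a}=f_{a}(P)$.

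The crucial observation is that $h:=f_{a}-f$ has compact support. Indeed, whenever $|\lambda|>|a|$ the real numbers $\lambda$ and $\lambda+a$ have the same sign, so $h$ vanishes outside $[-|a|,|a|]$. Since $\op{Sp}(P)$ is discrete with only finitely many eigenvalues in any bounded interval, we obtain
\[
    F_{a}-F \; = \; h(P) \; = \; \sum_{\lambda\in\op{Sp}(P)\cap\op{supp}(h)} h(\lambda)\,\Pi_{\lambda},
\]
where $\Pi_{\lambda}$ denotes the orthogonal projection onto the $\lambda$-eigenspace of $P$. In particular, $F_{a}-F$ is a \emph{finite-rank} operator.

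By elliptic regularity, every eigenspace of $P$ is contained in $C^{\infty}(M,E)$, so each $\Pi_{\lambda}$ appearing in the sum is smoothing; hence the finite sum above lies in $\Psi^{-\infty}(M,E)$, which is exactly~\eqref{eq:DiffO.sign-P+a}. The proof involves no serious obstacle; the only point requiring a moment's care is to check that the definitions $F=P|P|^{-1}$ and $F_{a}=(P+a)|P+a|^{-1}$ coincide with the functional-calculus expressions $f(P)$ and $f_{a}(P)$, but this is immediate from the spectral theorem given the conventions on complex powers recalled in Section~\ref{sec.EFNCR}. Note that no hypothesis beyond ellipticity and selfadjointness of $P$ is used, so the same argument will apply verbatim in the pseudodifferential setting of the later sections.
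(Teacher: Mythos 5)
Your proof is correct and uses essentially the same idea as the paper: only the finitely many eigenvalues in $[-|a|,|a|]$ can change sign, and the corresponding spectral projections are finite-rank, hence smoothing by elliptic regularity. The paper writes out the difference $F_{a}-F$ explicitly via a case analysis on the sign of $a$ (cf.~Eq.~(\ref{eq:Non-generic.Fa})), while you package the same computation through the compactly supported function $h=f_{a}-f$ in the functional calculus; this is a presentational difference only.
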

\begin{proof}
For $\lambda \in \C$, we shall denote by $\Pi_{\lambda}(P)$ the orthogonal projection onto $\ker (P-\lambda)$. If $\lambda
\not \in \Sp P$, then $\Pi_{\lambda}(P)=0$, but if $\lambda \in \Sp P$, then $\Pi_{\lambda}(P)$ is a finite-rank
projection and a smoothing operator.
Moreover, with respect to the strong topology of $\cL(L^{2}(M,E))$, we have
\begin{equation*}
    F=\sum_{\lambda>0}\Pi_{\lambda}(P) - \sum_{\lambda<0}\Pi_{\lambda}(P) \qquad \text{and} \qquad
    F_{a}=\sum_{\lambda>-a}\Pi_{\lambda}(P) - \sum_{\lambda<-a}\Pi_{\lambda}(P) .
\end{equation*}

If $a>0$, then
\begin{multline}
    F_{a}=\sum_{\lambda>0}\Pi_{\lambda}(P) + \sum_{0\geq \lambda >-a}\Pi_{\lambda}(P) - \biggl( \sum_{\lambda<0}\Pi_{\lambda}(P) -
    \sum_{-a\leq \lambda <0} \Pi_{\lambda}(P)\biggr)\\
    = F+2 \sum_{-a <\lambda <0}\Pi_{\lambda}(P) + \Pi_{0}(P)+\Pi_{-a}(P).
    \label{eq:Non-generic.Fa}
\end{multline}
Similarly, if $a<0$, then
\begin{equation*}
    F_{a}= F-2 \sum_{0 <\lambda <-a}\Pi_{\lambda}(P) - \Pi_{0}(P)-\Pi_{-a}(P).
\end{equation*}
In any case $F_{a}$ and $F$ agree up to a smoothing operator. The lemma is proved.
\end{proof}

In the remainder of the section we assume that $P$ is a first order \emph{differential operator}. As explained in the
previous section, the zeta and eta functions of $P$ may have singularities only at the points of the admissible set,
\begin{equation*}
    \Sigma:=\biggl\{k\in \Z; \ k\leq n\biggr\}\setminus \left\{0\right\}.
\end{equation*}

As we shall now see, the fact that $P$ is a differential operator allows us to shrink the admissible sets at which
the zeta and eta functions may have singularities.

\begin{lemma} If $M$ has even dimension $n$, then
\begin{enumerate}
    \item[(i)] $\zeta_{\up}(P;s)$ and $\zeta_{\down}(P;s)$ may have singularities only at positive integers~$\leq n$.\smallskip

    \item[(ii)] $\zeta(|P|;s)$ may have singularities only at even positive integers~$\leq n$.\smallskip

    \item[(iii)]  $\eta(P;s)$ may have singularities only at odd positive integers~$\leq n$.
\end{enumerate}
\end{lemma}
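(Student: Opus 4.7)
The plan is to reduce all three claims to the residue identities $\Res_{s=\sigma}\zetaupdown(P;s) = \Res P_{\updown}^{-\sigma}$, $\Res_{s=\sigma}\zeta(|P|;s) = \Res |P|^{-\sigma}$, and $\Res_{s=\sigma}\eta(P;s) = \Res[F|P|^{-\sigma}]$ from (\ref{eq:Background.Residues-zeta})--(\ref{eq:Background.Residues-eta}) with $m=1$, combined with a $\xi$-parity analysis that leverages $P$ being a differential operator. For (i), when $\sigma\leq -1$ is an integer, (\ref{eq:NCR.Pudpdown-integers}) gives $P_{\updown}^{-\sigma}=P^{-\sigma}$, a positive integer power of $P$, hence a differential operator; its noncommutative residue vanishes by Proposition~\ref{prop:Background.NCR}(1). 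Together with regularity at $s=0$ this confines the singularities of $\zetaupdown(P;s)$ to positive integers $\leq n$.

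For (ii) and (iii), I would introduce the following notion: a classical \psido\ $A$ of integer order is \emph{of type} $\epsilon\in\{\pm 1\}$ if each homogeneous component satisfies $[A]_d(x,-\xi)=\epsilon(-1)^d[A]_d(x,\xi)$. A direct Moyal-product computation (each $\xi$-derivative contributing a factor of $(-1)$) shows that types are multiplicative: $\epsilon(A\sharp B)=\epsilon(A)\epsilon(B)$. Since $P$ is a first-order differential operator, its symbol is polynomial in $\xi$ of matching degree, so $P$ and $P^2$ are of type $+1$. The key claim is that $|P|$ is of type $-1$. The principal symbol $[|P|]_1=([P^2]_2)^{1/2}$ is invariant under $\xi\mapsto-\xi$, matching type $-1$ at degree $1$; the lower-order components $[|P|]_{1-k}$ are determined inductively from $\sigma_{|P|}\sharp\sigma_{|P|}=\sigma_{P^2}$, which at order $2-k$ reduces to a Sylvester-type equation $[|P|]_1\cdot u+u\cdot[|P|]_1=R_k$ for $u:=[|P|]_{1-k}$, where $R_k$ is built from previously-constructed components and their derivatives. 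Since $[|P|]_1$ is positive definite, $u$ is uniquely determined, and an inductive parity tally of $R_k$ (using that $P^2$ is of type $+1$ and that $\partial_\xi$-derivatives flip parity) yields the parity required by type $-1$.

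Granted this, multiplicativity of types gives $|P|^{-\sigma}$ of type $(-1)^\sigma$ and $F|P|^{-\sigma}=P|P|^{-\sigma-1}$ of type $(-1)^{\sigma+1}$. In even dimension $n$, the degree-$(-n)$ symbol of a type-$\epsilon$ operator satisfies $[A]_{-n}(x,-\xi)=\epsilon[A]_{-n}(x,\xi)$, so when $\epsilon=-1$ the integrand in $c_A(x)=(2\pi)^{-n}\int_{S^{n-1}}[A]_{-n}(x,\xi)\,d\xi$ is odd on the sphere; this forces $c_A(x)\equiv 0$, hence $\Res A=0$. Therefore $\Res|P|^{-\sigma}=0$ for $\sigma$ odd, and $\Res[F|P|^{-\sigma}]=0$ for $\sigma$ even. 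The remaining negative-integer candidates reduce to differential operators: for $\sigma<0$ even, $|P|^{-\sigma}=P^{-\sigma}$ via $|P|^{2\ell}=P^{2\ell}$; for $\sigma<0$ odd, combining $F|P|=P$ with $|P|^{2\ell}=P^{2\ell}$ gives $F|P|^{-\sigma}=P^{-\sigma}$; in both cases the residue vanishes. Regularity of $\eta(P;s)$ at $s=0$ comes from $\Res F=0$ (Atiyah--Patodi--Singer, Gilkey). Combining these vanishings delivers (ii) and (iii). The main obstacle is the inductive verification that $|P|$ is of type $-1$: while the principal parity is immediate, propagating the shifted parity through the Sylvester-type recursion demands careful sign bookkeeping at each order.
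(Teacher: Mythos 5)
Your proof is correct, but it takes a genuinely different route from the paper for the crucial step (ii)--(iii). The paper disposes of the residues $\Res[FP^{-k}]$ for positive $k$ by invoking the identity $\Res[\Pi_{-}(P)P^{-k}]=\frac{1}{2}\Res P^{-k}$ from page~1081 of~\cite{Po:SAZFNCR}, which combined with $F=1-2\Pi_{-}(P)$ (mod smoothing) gives $\Res[FP^{-k}]=0$ in one line; the remaining vanishings at negative integers and the parity splitting via $|P|^{-k}=F^{k}P^{-k}$ and $F|P|^{-k}=F^{k+1}P^{-k}$ are exactly as in your argument. You instead develop a self-contained $\xi$-parity calculus: defining the ``type'' of a classical symbol, proving multiplicativity under the Moyal product, and establishing inductively (via the Sylvester recursion for $\sigma_{|P|}\sharp\sigma_{|P|}=\sigma_{P^{2}}$, with uniqueness coming from positive definiteness of $[|P|]_{1}$) that $|P|$ has type $-1$; from this, $|P|^{-\sigma}$ has type $(-1)^{\sigma}$ and $F|P|^{-\sigma}$ has type $(-1)^{\sigma+1}$, so the degree-$(-n)$ symbol is odd on $S^{n-1}$ precisely at the claimed parities, forcing the residue density to vanish pointwise. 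Your approach is more labor-intensive (the inductive sign bookkeeping, which you flag, does go through: each term $\partial_{\xi}^{\alpha}[|P|]_{1-i}D_{x}^{\alpha}[|P|]_{1-j}$ with $|\alpha|+i+j=k$ picks up overall parity $(-1)^{k}$, matching what type $-1$ demands of $[|P|]_{1-k}$), but it buys independence from the cited result and makes the mechanism transparent; it is in fact the even-dimensional analogue of the odd-class argument the paper uses for Lemma~\ref{lem:DO.singularities-odd-dimension}. The paper's route is shorter at the cost of a black-box citation.
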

\begin{proof}
Let $k$ be a non-zero integer~$\leq n$. In view of~(\ref{eq:NCR.Pudpdown-integers}) and (\ref{eq:Background.Residues-zeta}) we have
\begin{equation}
    \Res_{s=k}\zetaupdown(P;s)= \Res P^{-k}_{\updown}=\Res P^{-k}.
    \label{eq:DO.residues-zeta}
\end{equation}
Moreover, using~(\ref{eq:Background.Residues-zeta|P|}) and~(\ref{eq:DiffO-signPk}) we get
\begin{equation}
    \Res_{s=k}\zeta(|P|;s)= \Res |P|^{-k}=\Res\left[ F^{k}P^{-k}\right]= \left\{
    \begin{array}{ll}
       \Res P^{-k}   & \text{if $k$ is even},  \\
        \Res\left[ FP^{-k}\right] &  \text{if $k$ is odd.} \\
    \end{array}\right.
     \label{eq:DO.Residues-zeta|P|}
\end{equation}
Likewise, combining~(\ref{eq:Background.Residues-eta}) and~(\ref{eq:DiffO-signPk}) gives
\begin{equation}
    \Res_{s=k}\eta(P;s)= \Res\left[F |P|^{-k}\right]=\Res\left[ F^{k+1}P^{-k}\right]= \left\{
    \begin{array}{ll}
       \Res\left[ FP^{-k} \right]  & \text{if $k$ is even},  \\
        \Res P^{-k} &  \text{if $k$ is odd.} \\
    \end{array}\right.
     \label{eq:DO.Residues-eta}
\end{equation}

As the noncommutative residue of a differential operator is always zero, we see that $\Res P^{-k}=0$ whenever $k$ is
negative. Combining this with~(\ref{eq:DO.residues-zeta}) shows that $\zetaupdown(P;s)$ cannot have poles at negative integers, and hence
may have poles only at positive integers~$\leq n$. In addition, by using~(\ref{eq:DO.Residues-zeta|P|})--(\ref{eq:DO.Residues-eta})
we also see that $\zeta(|P|;s)$ (resp., $\eta(P;s)$)
cannot have poles at even (resp., odd) negative integers.

Let $\Pi_{-}(P)$ be the orthogonal projection onto the negative eigenspace of $P$. As $P$ is a differential operator of odd order
and $M$ has even dimension, it is proved on~\cite[page~1081]{Po:SAZFNCR} that
\begin{equation*}
    \Res \left[\Pi_{-}(P)P^{-k}\right]=\frac{1}{2}\Res P^{-k}.
\end{equation*}
As $FP^{-k}=(1-2\Pi_{-}(P))P^{-k}$ we see that
\begin{equation*}
    \Res \left[FP^{-k}\right]=\Res \left[ (1-2\Pi_{-}(P))P^{-k}\right]=\Res P^{-k} -2\Res \left[
    \Pi_{-}(P)P^{-k}\right]=0.
\end{equation*}
Combining this with~(\ref{eq:DO.Residues-zeta|P|})--(\ref{eq:DO.Residues-eta}) shows that the function $\zeta(|P|;s)$ (resp., $\eta(P;s)$)
cannot have poles at odd (resp., even) integers.

It follows from all this that the function $\zeta(|P|;s)$ (resp., $\eta(P;s)$) may have poles only at even (resp., odd) positive
integers~$\leq n$. This completes the proof.
\end{proof}

\begin{lemma}\label{lem:DO.singularities-odd-dimension}
    If $M$ has odd dimension $n$, then
\begin{enumerate}
    \item[(i)] $\zeta_{\up}(P;s)$ and $\zeta_{\down}(P;s)$ are entire functions.\smallskip

    \item[(ii)] $\zeta(|P|;s)$ may have singularities only at odd integers~$\leq n$.\smallskip

    \item[(iii)]  $\eta(P;s)$ may have singularities only at non-zero even integers~$\leq n$.
\end{enumerate}
\end{lemma}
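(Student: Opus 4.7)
The plan is to reduce all three claims to a single vanishing statement: $\Res P^{-k}=0$ for every non-zero integer $k$. Granting this, the three conclusions follow immediately from the formulas (\ref{eq:DO.residues-zeta})--(\ref{eq:DO.Residues-eta}) established in the proof of the even-dimensional lemma. Indeed, $\Res_{s=k}\zetaupdown(P;s)=\Res P^{-k}=0$ for every $k\in \Sigma$, and since $\zetaupdown(P;s)$ is already known to be regular at $s=0$, both $\zetaup(P;s)$ and $\zetadown(P;s)$ are entire, giving (i). For (ii), at an even $k\in \Sigma$ the residue of $\zeta(|P|;s)$ equals $\Res P^{-k}=0$ by~(\ref{eq:DO.Residues-zeta|P|}), so poles are possible only at odd integers~$\leq n$. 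For (iii), at an odd $k\in \Sigma$ the residue of $\eta(P;s)$ equals $\Res P^{-k}=0$ by~(\ref{eq:DO.Residues-eta}), so the only possible poles are at non-zero even integers~$\leq n$.

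The vanishing of $\Res P^{-k}$ I would deduce by a parity argument that exploits the hypothesis that $n$ is odd. Since $P$ is a first-order differential operator, its full symbol has the form $p_{1}(x,\xi)+p_{0}(x)$ with $p_{1}$ homogeneous of degree one and $p_{0}$ independent of $\xi$; in particular $p_{1-j}(x,-\xi)=(-1)^{1-j}p_{1-j}(x,\xi)$ for $j=0,1$, i.e.\ $P$ is odd-class. By the standard Seeley-type construction of complex powers, this parity propagates to the symbol of every integer power: the homogeneous components of $P^{-k}$ satisfy
\begin{equation*}
p^{P^{-k}}_{-k-j}(x,-\xi)=(-1)^{-k-j}p^{P^{-k}}_{-k-j}(x,\xi) \qquad \forall j\geq 0, \ k\in \Z.
\end{equation*}
For $k\in \Sigma$, taking $j=n-k\geq 0$ yields $p^{P^{-k}}_{-n}(x,-\xi)=(-1)^{-n}p^{P^{-k}}_{-n}(x,\xi)=-p^{P^{-k}}_{-n}(x,\xi)$, because $n$ is odd. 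Hence the fibrewise integral over $S^{n-1}$ in~(\ref{eq:Background.residue-density}) vanishes, the residue density $c_{P^{-k}}(x)$ is identically zero, and $\Res P^{-k}=0$ follows from~(\ref{eq:Background.NCR}).

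The main obstacle is justifying that the odd-class property propagates from $P$ to all its integer powers $P^{-k}$. For positive integer exponents it is immediate, since differential operators form an algebra and the symbol product preserves the parity pattern. For negative exponents one uses the Seeley recursion for the parametrix: inverting the leading symbol preserves parity because $p_{1}$ is odd in $\xi$, and each step in the asymptotic expansion of the symbol of $P^{-1}$ (and then of $P^{-k}$ by composition, or directly of $P^{-k}$ via a contour integral over the resolvent) preserves the $\xi\mapsto-\xi$ parity. This is classical, going back to work of Atiyah-Patodi-Singer and Seeley, but it must be invoked explicitly in writing out the full argument.
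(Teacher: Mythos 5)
Your proof is correct and follows essentially the same route as the paper: you reduce all three statements to the vanishing $\Res P^{-k}=0$ for $k\in\Sigma$ via the residue formulas from the even-dimensional lemma, and then establish that vanishing by observing that $P$ is odd-class, that the odd-class property passes to $P^{-k}$, and that the residue density of an odd-class operator vanishes in odd dimension. The paper invokes Kontsevich--Vishik for the propagation of the odd-class property to parametrices exactly where you appeal to Seeley's construction; the two are the same point.
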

\begin{proof}
   In the terminology of~\cite{KV:GDEO}, an operator $Q\in \Psi^{m}(M,E)$, $m\in \Z$, is said to be odd-class if, in any
   local coordinates and local trivialization of $E$, the symbol $q(x,\xi)\sim\sum_{j\geq 0}q_{m-j}(x,\xi)$ of $Q$
   satisfies
   \begin{equation}
       q_{m-j}(x,-\xi)=(-1)^{m-j}q_{m-j}(x,\xi) \qquad \forall j \in \N_{0}.
       \label{eq:DO.odd-class-condition}
   \end{equation}Using~(\ref{eq:Background.residue-density}) it is not difficult to check that the above condition for $m-j=-n$ implies that the
   density $c_{Q}(x)=0$ if $n$ is odd (cf.~\cite{KV:GDEO}). Thus the noncommutative residue of an odd-class \psido\ is always zero in odd
   dimension.

   Any differential operator is odd-class and it not difficult to check that the parametrix of any elliptic odd-class
   \psido\ is again odd-class. Thus, for all $k\in \Z$, the operator $P^{-k}$ is odd-class. As $n$ is odd, we then
   deduce that
   \begin{equation*}
       \Res P^{-k}=0 \qquad \forall k\in \Z.
   \end{equation*}
   Combining this with~(\ref{eq:DO.residues-zeta})--(\ref{eq:DO.Residues-eta}) (and the regularity of $\eta(P;s)$ at 
   $s=0$) yields the lemma.
\end{proof}

Recaling that $M$ has dimension $n$, we are now in a position to prove the main result of this section. 

\begin{proposition}\label{prop:1stDO.perturbation}   Suppose that $P$ is a selfadjoint elliptic first-order differential operator. Then, for all but  finitely many values of $a$,
   \begin{enumerate}
       \item $\zeta_{\up}(P+a;s)$ and $\zeta_{\down}(P+a;s)$ are singular at all positive integers~$\leq n$ if $n$ is even.

       \item $\zeta(|P+a|;s)$  is singular at all positive even (resp., odd) integers~$\leq n$ if $n$ is even
       (resp., odd).

       \item $\eta(P+a;s)$  is singular at all positive odd (resp., even) integers~$\leq n$ if $n$ is even
       (resp., odd).
   \end{enumerate}
\end{proposition}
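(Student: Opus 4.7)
The plan is to reduce each of the three statements, via the residue identities (\ref{eq:DO.residues-zeta})--(\ref{eq:DO.Residues-eta}), to proving that for each positive integer $k\leq n$ which is relevant to the statement at hand, either $\Res(P+a)^{-k}$ or $\Res\bigl[F_{a}(P+a)^{-k}\bigr]$ is nonzero for all but finitely many $a\in\R$. Once this is established for each individual $k$, taking the union over the finitely many exceptional values of $k$ in each item yields the proposition.

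The key observation is that both of these quantities are polynomials in $a$ of degree exactly $n-k$. Indeed, applying Lemma~\ref{lem:Non-generic.Binomial} and choosing $N$ with $k+N\geq n+1$, the tail $(P+a)^{-k}-\sum_{j<N}\binom{-k}{j}a^{j}P^{-(k+j)}$ lies in $\Psi^{-(n+1)}(M,E)$, on which $\Res$ vanishes (Proposition~\ref{prop:Background.NCR}). Hence
\begin{equation*}
    \Res(P+a)^{-k}=\sum_{j=0}^{n-k}\binom{-k}{j}a^{j}\Res P^{-(k+j)}.
\end{equation*}
Similarly, Lemma~\ref{lem:DiffO.signP+a} gives $F_{a}(P+a)^{-k}=F(P+a)^{-k}\bmod\Psi^{-\infty}(M,E)$, so
\begin{equation*}
    \Res\bigl[F_{a}(P+a)^{-k}\bigr]=\sum_{j=0}^{n-k}\binom{-k}{j}a^{j}\Res\bigl[FP^{-(k+j)}\bigr].
\end{equation*}
The leading coefficients (in $a^{n-k}$) are nonzero multiples of $\Res P^{-n}$ and $\Res[FP^{-n}]$ respectively, since $\binom{-k}{n-k}\neq 0$ for $1\leq k\leq n$. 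To see that the appropriate one of these is positive, I use the identity $P^{-n}=F^{n}|P|^{-n}$ together with $F^{2}=1\bmod\Psi^{-\infty}(M,E)$: when $n$ is even, $F^{n}=1$ mod smoothing gives $\Res P^{-n}=\Res|P|^{-n}$, whereas when $n$ is odd, $F^{n+1}=1$ mod smoothing gives $\Res[FP^{-n}]=\Res|P|^{-n}$. In either case, Lemma~\ref{lem:DiffO.Res|P|n} supplies $\Res|P|^{-n}>0$. Thus the relevant polynomial is a nonzero polynomial in $a$ (a positive constant when $k=n$), and so has only finitely many real roots.

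To finish, I match each case of the proposition to the correct polynomial. For (1) with $n$ even, (\ref{eq:DO.residues-zeta}) reduces every singularity at $k\in\{1,\dots,n\}$ to $\Res(P+a)^{-k}$, and $\Res P^{-n}>0$. For (2), the identity (\ref{eq:DO.Residues-zeta|P|}) gives $\Res(P+a)^{-k}$ for $k$ even (used when $n$ is even) and $\Res\bigl[F_{a}(P+a)^{-k}\bigr]$ for $k$ odd (used when $n$ is odd); in each parity regime, the relevant leading coefficient is the positive one. For (3), the roles of the two polynomials swap via (\ref{eq:DO.Residues-eta}): $\Res(P+a)^{-k}$ appears for $k$ odd when $n$ is even, and $\Res\bigl[F_{a}(P+a)^{-k}\bigr]$ for $k$ even (and nonzero) when $n$ is odd. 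In each instance the leading term is again $\pm\Res|P|^{-n}$, and the finite union of the root sets over the finitely many admissible $k$ is finite. The only subtlety in the argument is matching the parity of $k$ to the correct formula so that the leading coefficient picks up $\Res|P|^{-n}$ rather than zero; once this bookkeeping is in place, the rest assembles directly from the lemmas already proved.
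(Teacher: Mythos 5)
Your proof is correct and takes essentially the same route as the paper: apply Lemma~\ref{lem:Non-generic.Binomial} and Lemma~\ref{lem:DiffO.signP+a} to write the relevant residue as a polynomial in $a$ of degree $n-k$, then identify its leading coefficient with a nonzero multiple of $\Res|P|^{-n}>0$ via the parity identities $\Res P^{-n}=\Res|P|^{-n}$ ($n$ even) and $\Res[FP^{-n}]=\Res|P|^{-n}$ ($n$ odd). The only cosmetic difference is that the paper packages the parity bookkeeping into a single exponent $l\in\{0,1\}$ (with $l=0$ for $n$ even and $l=1$ for $n$ odd) and works throughout with $\Res[F_a^l(P+a)^{-k}]$, whereas you track the two residues $\Res(P+a)^{-k}$ and $\Res[F_a(P+a)^{-k}]$ separately and match them case by case; the content is identical.
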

\begin{proof}
Let $a \in \R$ and let $k$ be a positive integer~$\leq n$. In view of~(\ref{eq:DO.residues-zeta}) we have
\begin{equation}
    \Res_{s=k}\zeta_{\updown}(P+a;s)=\Res (P+a)^{-k}.
    \label{eq:DO.residues-zeta-P+a}
\end{equation}

Let $F_{a}:=(P+a)|P+a|^{-1}$ be the sign operator of $P+a$. In addition, set $l=0$ if $n$ is even and $l=1$ if $n$ is
odd. Then~(\ref{eq:DO.Residues-zeta|P|}) shows that, if $k$ and $n$ have the same parity, then
\begin{equation}
    \Res_{s=k}\zeta(|P+a|;s)= \Res \left[ F_{a}^{l}(P+a)^{-k}\right].
\end{equation}
Likewise, using~(\ref{eq:DO.Residues-eta}) we see that, if $k$ and $n$ have opposite parities, then
\begin{equation}
    \Res_{s=k}\eta(P+a;s)= \Res \left[ F_{a}^{l}(P+a)^{-k}\right].
\label{eq:DO.Residues-eta-P+a}
\end{equation}

It follows from~(\ref{eq:DO.residues-zeta-P+a})--(\ref{eq:DO.Residues-eta-P+a}) that in order to prove the proposition it is enough to show, that for any positive
integer $k\leq n$ and for all but finitely values of $a$,
\begin{equation}
    \Res \left[ F_{a}^{l}(P+a)^{-k}\right]\neq 0.
\end{equation}

Thanks to Lemma~\ref{lem:Non-generic.Binomial} we know that
\begin{equation*}
    (P+a)^{-k}= \sum_{0\leq j \leq n-k}\binom{-k}{j}a^{j}P^{-(k+j)} \qquad \bmod \Psi^{-(n+1)}(M,E),
\end{equation*}Combining this with Lemma~\ref{lem:DiffO.signP+a} we get
\begin{align*}
   F_{a}^{l}(P+a)^{-k}  & = F^{l}(P+a)^{-k} \quad \bmod  \Psi^{-\infty}(M,E),\\
     & = \sum_{0\leq j \leq n-k}\binom{-k}{j}a^{j} F^{l}P^{-(k+j)} \quad \bmod \Psi^{-(n+1)}(M,E).
\end{align*}
As the noncommutative residue vanishes on
$\Psi^{-(n+1)}(M,E)$, we deduce that
\begin{equation}
     \Res\left[ F_{a}^{l} (P+a)^{-k}\right]= \sum_{0\leq  j \leq n-k}\binom{-k}{j}a^{j}\Res\left[F^{l} P^{-(k+j)}\right].
     \label{eq:DiffO.Resk-etaP+a}
\end{equation}
Thus $\Res\left[ F_{a}^{l} (P+a)^{-k}\right]$ is polynomial in $a$ of degree $n-k$ whose leading coefficient is a non-zero multiple of $\Res
\left[ F^{l}P^{-n}\right]$.

If $n$ is even, then $F^{l}P^{-n}=P^{-n}=|P|^{-n}$. If $n$ is odd, then~(\ref{eq:DiffO-signPk}) implies that
$F^{l}P^{-n}=F^{n}P^{-n}=|P|^{-n}$. Combining this with Lemma~\ref{lem:DiffO.Res|P|n} shows that, in both cases, \[\Res \left[F^{l}P^{-n}\right]=\Res |P|^{-n}>0.\]
It then follows that $ \Res\left[ F_{a}^{l} (P+a)^{-k}\right]$ is a \emph{non-zero} polynomial, and so
it may vanish for at most finitely many values of $a$. This completes the proof.
\end{proof}

\begin{corollary}\label{cor:1stDO. generic-sing}
    For a generic selfadjoint elliptic first-order differential  operator $P:C^{\infty}(M,E)\rightarrow
    C^{\infty}(M,E)$ the following hold:
    \begin{enumerate}
       \item[(i)] $\zeta_{\up}(P;s)$ and $\zeta_{\down}(P;s)$ are singular at all positive integers~$\leq n$ if $n$ is even.

       \item[(ii)] $\zeta(|P|;s)$  is singular at all positive even (resp., odd) integers~$\leq n$ if $n$ is even
       (resp., odd).\smallskip

       \item[(iii)] $\eta(P;s)$  is singular at all positive odd (resp., even) integers~$\leq n$ if $n$ is even
       (resp., odd).
    \end{enumerate}
\end{corollary}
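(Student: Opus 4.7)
The plan is to deduce the corollary from Proposition~\ref{prop:1stDO.perturbation} by a standard open-dense argument, exploiting the fact that the admissible set of potential singularities is \emph{finite} (so one does not even need the Baire category theorem).

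Let $\cD^{1}(M,E)^{\text{sa,ell}}$ denote the space of selfadjoint elliptic first-order differential operators on $E$, endowed with the Fr\'echet topology induced from $\Psi^{1}(M,E)$. For each $k$ in the appropriate (parity-restricted) subset of positive integers~$\leq n$, I would introduce
\begin{equation*}
    \cU_{k}:= \bigl\{ P\in \cD^{1}(M,E)^{\text{sa,ell}}\ ;\ \text{the relevant zeta/eta function is singular at}\ s=k\bigr\}.
\end{equation*}
By the residue formulas~(\ref{eq:DO.residues-zeta})--(\ref{eq:DO.Residues-eta}), membership in $\cU_{k}$ is expressed as the non-vanishing of $\Res P^{-k}$ or $\Res[FP^{-k}]$, where $F=P|P|^{-1}$.

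The first step is openness: the residue of $P^{-k}$ (resp.~$FP^{-k}$) depends only on the degree $-n$ homogeneous component of the symbol of that operator, which in any local trivialization is a universal polynomial expression in finitely many homogeneous components of the symbol of $P$ (via Seeley's construction of the parametrix and of the symbol of $F$). Hence the map $P\mapsto \Res P^{-k}$ (resp.~$P\mapsto \Res[FP^{-k}]$) is continuous on $\cD^{1}(M,E)^{\text{sa,ell}}$ in the Fr\'echet topology, and the non-vanishing locus $\cU_{k}$ is open.

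The second step is density: for any $P_{0}\in \cD^{1}(M,E)^{\text{sa,ell}}$ and any neighborhood $\cV$ of $P_{0}$, the one-parameter family $\{P_{0}+a\ ;\ a\in\R\}$ lies inside $\cD^{1}(M,E)^{\text{sa,ell}}$ and meets $\cV$ for all sufficiently small $|a|$. Proposition~\ref{prop:1stDO.perturbation} applied to $P_{0}$ tells us that all but finitely many values of $a$ place $P_{0}+a$ in $\cU_{k}$, so arbitrarily small $a$ work and $\cU_{k}$ is dense. Taking the intersection over the finitely many admissible values of $k$ yields an open dense subset of $\cD^{1}(M,E)^{\text{sa,ell}}$ on which all three singularity statements hold simultaneously, proving genericity.

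The only non-routine point is the continuity statement in the openness step: one needs to check that the degree $-n$ component of the symbol of $P^{-k}$ (and of $FP^{-k}$), which is what enters the noncommutative residue via~(\ref{eq:Background.residue-density}), depends continuously on $P$ in the Fr\'echet topology. This follows from the fact that a parametrix of $P$ can be built by the standard iterative procedure out of finitely many homogeneous symbol components of $P$, and the principal-type construction of the symbol of $F=P|P|^{-1}$ is likewise a continuous polynomial/rational operation in those components away from the characteristic variety, which is empty by ellipticity. Once that is in place, everything else is immediate from Proposition~\ref{prop:1stDO.perturbation}.
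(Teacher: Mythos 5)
Your argument is correct and is essentially the open--dense genericity argument that the paper sketches in its introduction (in the pseudodifferential setting, with the Baire category theorem) but does not actually carry out; the paper states the corollary with no proof at all, treating it as an immediate consequence of Proposition~\ref{prop:1stDO.perturbation}. Two remarks, one favorable and one cautionary. On the favorable side, you correctly observe that for first-order differential operators the admissible set is \emph{finite}, so a finite intersection of open dense sets suffices and the Baire category theorem (which the introduction invokes) is unnecessary; this is a genuine simplification and worth pointing out. On the cautionary side, your description of the continuity step is slightly loose: the homogeneous symbol components of $F=P|P|^{-1}$ are \emph{not} polynomial or rational functions of the symbol components of $P$. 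Rather, $|P|^{-1}=(P^{2})^{-1/2}$ is built by Seeley's construction via contour integrals of resolvent symbols, so the degree $-n$ component of the symbol of $FP^{-k}$ depends on the symbol of $P$ through universal \emph{integral} expressions, which are nonetheless jointly continuous in the Fr\'echet topology because the symbol of $P$ enters only polynomially in the resolvent symbol construction and the contour can be chosen locally uniformly thanks to ellipticity and selfadjointness. One should also note, as you implicitly do, that a jump of $\ker P$ (and hence of $\Pi_{0}(P)$ and of $F$ as an operator) only changes $F$ by a smoothing operator, which does not affect any of the relevant noncommutative residues, so $P\mapsto\Res[FP^{-k}]$ really is continuous. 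With those caveats, the density step from Proposition~\ref{prop:1stDO.perturbation} and the finite intersection are exactly as you describe, and the proof is complete.
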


Finally, recall that a Dirac-type operator is a selfadjoint first order differential operator $D:C^{\infty}(M,E)\rightarrow
C^{\infty}(M,E)$ such that the principal symbol of $D^{2}$ satisfies
\begin{equation*}
    \sigma_{2}(D^{2})(x,\xi)=|\xi|^{2}.\op{id}_{E_{x}} \qquad \forall (x,\xi)\in T^{*}M,
\end{equation*}where $|\xi|^{2}:=g^{ij}(x)\xi_{i}\xi_{j}$ is the Riemannian metric of $T^{*}M$.

By a result of Branson-Gilkey~\cite[Theorem 4.3]{BG:REFODT}, if we restrict ourselves to the class of Dirac-type operators
then, \emph{generically}, the eta function has singularities at all positive odd integers~$k<n$ if
$n$ is even and at all non-zero even integers~$<n$ if $n$ if odd. 

The proof of Branson-Gilkey's result relied on the Riemannian 
invariant theory of~\cite{ABP:OHEIT} and~\cite{Gi:ITHEASIT}. While the use of the Riemannian invariant theory is an extremely 
powerfool tool to get precised information on the coefficients of the heat kernel asymptotics, it is rather specific to 
Laplace-type and Dirac-type operators. As a result, the arguments of~\cite{BG:REFODT} do not extend to general 
differential operators. Therefore, it would be desirable to have a purely analytical proof of Branson-Gilkey's result. 

Observe that the class of Dirac-type operator is invariant under perturbations by constants. Therefore, specializing Proposition~\ref{prop:1stDO.perturbation}
to Dirac-type operators we immediately get

\begin{corollary}\label{cor:Dirac.generic-sing}
    Let $D:C^{\infty}(M,E)\rightarrow
C^{\infty}(M,E)$ be a Dirac-type operator. Then we always can perturbate $D$ by an arbitrary
small real constant $a$ so that
    \begin{enumerate}
       \item[(i)] $\zeta_{\up}(D;s)$ and $\zeta_{\down}(D;s)$ are singular at all positive integers~$\leq n$ if $n$ is even.\smallskip

       \item[(ii)] $\zeta(|D|;s)$  is singular at all positive even (resp., odd) integers~$\leq n$ if $n$ is even
       (resp., odd).\smallskip

       \item[(iii)] $\eta(D;s)$  is singular at all positive odd (resp., even) integers~$\leq n$ if $n$ is even
       (resp., odd).
    \end{enumerate}
\end{corollary}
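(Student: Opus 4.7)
The plan is to deduce this corollary directly from Proposition~\ref{prop:1stDO.perturbation}, essentially by observing that Dirac-type operators form a subclass of selfadjoint first-order elliptic differential operators which is stable under perturbations by real scalars. Since Proposition~\ref{prop:1stDO.perturbation} already provides the desired singular behavior for $P+a$ for all but finitely many $a \in \mathbb{R}$, there is no real analytic obstacle remaining; the only thing to check is that the perturbation $D+a$ of a Dirac-type operator $D$ still falls within the hypotheses of that proposition, and that ``all but finitely many $a$'' is compatible with choosing $a$ arbitrarily small.

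First I would verify the stability of the Dirac-type class. For any $a \in \mathbb{R}$, the operator $D+a$ is manifestly selfadjoint, first-order, and differential. Expanding
\begin{equation*}
(D+a)^{2} = D^{2} + 2aD + a^{2},
\end{equation*}
the term $2aD$ has order $1$ and the constant $a^{2}$ has order $0$, so the principal symbol of $(D+a)^{2}$ coincides with that of $D^{2}$, namely $|\xi|^{2}\op{id}_{E_{x}}$. Hence $D+a$ is itself a Dirac-type operator, and in particular is elliptic of order $1$, so that all of the hypotheses needed to invoke Proposition~\ref{prop:1stDO.perturbation} with $P := D$ are met.

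Applying the proposition then yields, for all but finitely many $a \in \mathbb{R}$, the claimed singular behavior of $\zetaup(D+a;s)$, $\zetadown(D+a;s)$, $\zeta(|D+a|;s)$, and $\eta(D+a;s)$ at the appropriate positive integers~$\leq n$. Since the exceptional set of $a$ is finite, its complement is open and dense in $\mathbb{R}$, hence intersects every neighborhood of the origin. Choosing $a$ in this intersection gives the required arbitrarily small perturbation. The entire argument is thus essentially a one-line specialization, the only non-routine verification being the observation that adding a constant does not alter the principal symbol and so preserves the Dirac-type property.
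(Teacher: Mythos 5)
Your proof is correct and follows essentially the same one-line specialization the paper uses: observe that adding a real constant leaves the principal symbol of $D^{2}$ unchanged so the Dirac-type class is stable, then invoke Proposition~\ref{prop:1stDO.perturbation} with $P=D$ and note that the finite exceptional set of $a$ permits arbitrarily small perturbations. The only slight misframing is that the hypothesis of Proposition~\ref{prop:1stDO.perturbation} only requires $P=D$ to be a selfadjoint elliptic first-order differential operator (which any Dirac-type operator already is), so the check that $D+a$ remains Dirac-type is not needed to invoke the proposition but rather to justify that the perturbation stays within the Dirac-type category, which is the point the paper is making.
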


In particular, in the even-dimensional case this provides us with a purely analytical proof of the aforementioned genericity result of Branson and 
Gilkey. 

\section{First-order pseudodifferential operators}\label{sec:PDOs1}
In this section, we let $P:C^{\infty}(M,E)\rightarrow C^{\infty}(M,E)$ be a selfadjoint elliptic first-order  \psido.
Then the set of admissible points at which the eta function $\eta(P;s)$ and the zeta functions
$\zeta_{\updown}(P;s)$ and $\zeta(|P|;s)$ are allowed to have singularities is
\begin{equation}
     \Sigma= \left\{ k\in \Z; \ k\leq n\right\}\setminus \left\{0\right\}.
     \label{eq:1stPsiDOs.admissible-set}
\end{equation}
That is, $\Sigma$ consists of all non-zero integers~$\leq n$.

In the sequel, we let $F:=P|P|^{-1}$ the sign operator of $P$. 

\begin{lemma}\label{lem:Non-generic.residues-eta-Pa2}
 If the following three conditions simultaneously hold
 \begin{equation}
    \Res \left[ F|P|^{-n}\right] \neq 0, \qquad \Res P\neq 0, \qquad \Res |P|\neq 0,
     \label{eq:Non-generic.conditions-sing-Pa}
\end{equation}then, for all but countably many values of the real number $a$, the function $\eta(P+a;s)$ and the zeta functions
$\zeta_{\updown}(P+a;s)$ and $\zeta(|P+a|;s)$ are singular at {all} non-zero integers~$\leq n$.
 \end{lemma}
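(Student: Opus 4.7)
\medskip

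\noindent\textbf{Proof proposal.} The plan is to mimic the strategy of Proposition~\ref{prop:1stDO.perturbation}: for each $k \in \Sigma$, express the residue at $s=k$ of each of the three spectral functions as a polynomial in $a$, and then show that the three hypotheses in~(\ref{eq:Non-generic.conditions-sing-Pa}) guarantee that the leading coefficient of this polynomial is nonzero. Each polynomial then has at most finitely many roots, and a union over the countable admissible set $\Sigma$ from~(\ref{eq:1stPsiDOs.admissible-set}) gives a countable exceptional set.

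First I split $\Sigma$ into positive and negative integers. For a positive integer $k\leq n$, I use Lemma~\ref{lem:Non-generic.Binomial} truncated modulo $\Psi^{-(n+1)}(M,E)$ together with Lemma~\ref{lem:DiffO.signP+a} (which replaces $F_a$ by $F$ up to smoothing) to write, for each appropriate exponent $l\in\{0,1\}$ selected by the parity bookkeeping of~(\ref{eq:DiffO-signPk}),
\begin{equation*}
 \Res\!\left[F_a^{\,l}|P+a|^{-k}\right] = \sum_{0\leq j\leq n-k} \binom{-k}{j} a^{j}\,\Res\!\left[F^{l+\alpha}P^{-(k+j)}\right],
\end{equation*}
where $\alpha\in\{0,1\}$ tracks whether the factor $|P+a|^{-k}$ itself contributes an extra $F$ (i.e.\ whether $k$ is odd). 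The leading coefficient, at $j=n-k$, is a nonzero multiple of $\Res[F^{\varepsilon}P^{-n}]$ for some $\varepsilon\in\{0,1\}$. Using $P^{-n}=F^{n}|P|^{-n}$ modulo $\Psi^{-\infty}$, together with~(\ref{eq:DiffO-signPk}), this reduces to either $\Res|P|^{-n}$ (which is positive by Lemma~\ref{lem:DiffO.Res|P|n}) or $\Res[F|P|^{-n}]$ (nonzero by hypothesis). Hence for $k>0$ the polynomial in $a$ is genuinely nonzero in every case, and vanishes only at finitely many $a$.

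For a negative integer $k$, I set $N:=-k\geq 1$ and treat $(P+a)^{N}$ directly by the ordinary binomial theorem: $(P+a)^N=\sum_{j=0}^{N}\binom{N}{j}a^{N-j}P^{j}$. An analogous computation of $\Res_{s=k}\zeta_{\updown}(P+a;s)$, $\Res_{s=k}\zeta(|P+a|;s)$, and $\Res_{s=k}\eta(P+a;s)$ yields polynomials in $a$ whose coefficients are of the form $\Res[F^{\varepsilon}P^{j}]$. Since $\Res 1 = \Res F = 0$, the coefficient of $a^{N}$ always vanishes, but the coefficient of $a^{N-1}$ is a nonzero multiple of either $\Res P$ or $\Res[FP]=\Res|P|$ (here I use $FP=|P|$ modulo smoothing, which follows from $P=F|P|$ and $F^{2}=1$ modulo smoothing). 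Both of these are nonzero by the second and third hypotheses in~(\ref{eq:Non-generic.conditions-sing-Pa}), so these polynomials too have only finitely many roots.

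Assembling the contributions for each $k\in\Sigma$ and taking the (countable) union of finite exceptional sets yields the lemma. I expect the only real obstacle to be the bookkeeping: there are four cases to consider (positive vs.\ negative $k$, crossed with the parity of $k$) and three spectral functions, but each case reduces by the same mechanism to one of the three residues in~(\ref{eq:Non-generic.conditions-sing-Pa}) or to $\Res|P|^{-n}$, so no genuinely new idea beyond those already used in Proposition~\ref{prop:1stDO.perturbation} is needed.
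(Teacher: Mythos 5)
Your proposal matches the paper's own proof essentially step by step: reduce each residue at a nonzero integer $k\leq n$ to $\Res\left[F^{l}(P+a)^{-k}\right]$ for $l\in\{0,1\}$ (using Lemma~\ref{lem:DiffO.signP+a} to replace $F_a$ by $F$), expand via Lemma~\ref{lem:Non-generic.Binomial} for $k>0$ and the ordinary binomial theorem for $k<0$, and observe that the leading coefficients are nonzero multiples of $\Res|P|^{-n}$, $\Res\left[F|P|^{-n}\right]$, $\Res P$, or $\Res|P|$, the first being automatically positive by Lemma~\ref{lem:DiffO.Res|P|n} and the other three nonzero by hypothesis. The only differences are cosmetic (writing $|P+a|^{-k}$ with a compensating power of $F$ instead of $(P+a)^{-k}$ directly), and the final union over the countably many $k$ gives a countable exceptional set exactly as in the paper.
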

\begin{proof}
Let $a \in \R$ and denote by $F_{a}=(P+a)|P+a|^{-1}$ the sign operator of $P+a$. In addition, let
$k$ be a non-zero integer~$\leq n$. The formulas~(\ref{eq:DO.residues-zeta})--(\ref{eq:DO.Residues-eta}) remain valid for $P+a$. Therefore, each of the residues at
$s=k$ of the zeta functions $\zeta_{\updown}(P+a;s)$ and $\zeta(|P+a|;s)$ and the eta function $\eta(P;s)$ is equal to
one of the following
\begin{equation*}
    \Res (P+a)^{-k} \qquad \text{or} \qquad \Res \left[F_{a}(P+a)^{-k}\right].
\end{equation*}
Furthermore, as by Lemma~\ref{lem:DiffO.signP+a} the operators $F$ and $F_{a}$ agree up to a smoothing operator and the noncommutative
residue vanishes on smoothing operators, we get
\begin{equation*}
   \Res \left[F_{a}(P+a)^{-k}\right]= \Res \left[F(P+a)^{-k}\right].
\end{equation*}
Thus, in order to prove the lemma it is enough to show that if the conditions~(\ref{eq:Non-generic.conditions-sing-Pa}) hold, then,
for each non-zero integer~$k\leq n$, all but
finitely values of $a$ satisfy
\begin{equation}
    \Res (P+a)^{-k}\neq 0 \qquad \text{and} \qquad \Res \left[ F(P+a)^{-k}\right]\neq 0.
    \label{eq:1stPsiDOs.conditions-singularities-eta-zetaP+a}
\end{equation}

Let $l\in \{0,1\}$. If $k$ is a positive integer~$\leq n$, then, as in~(\ref{eq:DiffO.Resk-etaP+a}), we have
\begin{equation*}
     \Res\left[F^{l} (P+a)^{-k}\right]= \sum_{0\leq j \leq n-k}\binom{-k}{j}a^{j}\Res\left[ F^{l} P^{-(k+j)} \right].
\end{equation*}Combining this with~(\ref{eq:DiffO-signPk}) we see that $ \Res\left[F^{l} (P+a)^{-k}\right]$ is a polynomial in $a$ whose leading coefficient is a non-zero
multiple of
\begin{equation*}
    \Res \left[F^{l}P^{-n}\right] = \Res \left [F^{l+n}|P|^{-n}\right] = \left\{
    \begin{array}
    {ll}

        \Res |P|^{-n} & \text{if $n=l \bmod 2$},  \\

        \Res\left[F|P|^{-n}\right] &  \text{if $n=l+1 \bmod 2$}. \\

    \end{array}\right.
\end{equation*}

If $k$ is a negative integer, then the binomial formula gives
\begin{equation*}
   F^{l} (P+a)^{-k}= F^{l}(P+a)^{|k|}=  \sum_{0\leq j \leq |k|}\binom{|k|}{j}a^{|k|-j}F^{l}P^{j},
\end{equation*}Combining this with the fact that $\Res [1]=\Res F=0$ (cf.~the end of Section~\ref{sec.EFNCR}), we obtain
\begin{equation*}
      \Res\left[F^{l} (P+a)^{-k}\right]= \sum_{0\leq j \leq |k|}\binom{|k|}{j}a^{|k|-j}\Res \left[F^{l} P^{j} \right]=
      \sum_{1\leq j \leq |k|}\binom{|k|}{j}a^{|k|-j}\Res\left[F P^{j}\right].
\end{equation*}This shows that $ \Res\left[F^{l} (P+a)^{-k}\right]$ is a polynomial in $a$  of degree $|k|-1$ whose leading
coefficient is equal to
\begin{equation*}
    \Res \left[F^{l}P\right]=\left\{
    \begin{array}
    {ll}
         \Res P & \text{if $l=0$},  \\
        \Res \left[FP\right]=\Res|P| & \text{if $l=1$}.
    \end{array}\right.
\end{equation*}

It follows from all this that, for any non-zero integer $k\leq n$, both $\Res (P+a)^{-k}$ and $\Res\left[F (P+a)^{-k}\right]$ are polynomials in $a$
whose leading coefficients are non-zero multiples of one of  the following noncommutative residues:
\begin{equation*}
  \Res |P|^{-n}, \qquad  \Res \left[ F|P|^{-n}\right], \qquad \Res P, \qquad \Res |P|.
\end{equation*}
Thanks to Lemma~\ref{lem:DiffO.Res|P|n} we know that the first of these noncommutative residues is always non-zero. Therefore, if the other three are
 non-zero, then $\Res (P+a)^{-k}$ and $
\Res\left[F (P+a)^{-k}\right]$ both are \emph{non-zero} polynomials in $a$, and hence vanish for at most finitely many values
of $a$. As mentioned above, this proves the lemma.
\end{proof}

We shall now construct perturbations of $P$ that ensure us that the three conditions in~(\ref{eq:Non-generic.conditions-sing-Pa}) hold. To this end, for $-1<\epsilon<1$ and
$c\geq 0$, we define
\begin{equation}
    P_{\epsilon}:= P+\epsilon |P|, \qquad P_{\epsilon,c}:=P_{\epsilon}+cF|P_{\epsilon}|^{-n}.
    \label{eq:Non-generic.PePec}
\end{equation}
The operators $P_{\epsilon}$ and $P_{\epsilon,c}$ are selfadjoint elliptic first-order \psidos\ with the same principal
symbols. 

\begin{lemma}\label{lem:Non-generic.conditions-etaPa-Pec}
    If $\epsilon$ is small enough, then, for all but finitely many values of $c$, the operator $P_{\epsilon,c}$ satisfy
    the three conditions~(\ref{eq:Non-generic.conditions-sing-Pa}).
\end{lemma}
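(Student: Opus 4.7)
The plan is to compute each of the three noncommutative residues appearing in~(\ref{eq:Non-generic.conditions-sing-Pa}), evaluated at $P_{\epsilon,c}$, as an explicit affine (or constant) function of $c$ whose leading coefficient can be read off from $\epsilon$.  Since $F=P|P|^{-1}$ commutes with $P$ and $|P|$, it commutes with $P_{\epsilon}$, $|P_{\epsilon}|$ and $|P_{\epsilon}|^{-n}$.  Decomposing $L^{2}(M,E)$ along the positive and negative spectral subspaces of $P$ and working modulo smoothing operators (which are ignored by the noncommutative residue), one checks the three identities
\begin{equation*}
|P_{\epsilon}|=|P|+\epsilon P,\qquad F_{\epsilon,c}=F,\qquad |P_{\epsilon,c}|=|P_{\epsilon}|+c|P_{\epsilon}|^{-n}.
\end{equation*}
The middle identity holds because only finitely many eigenvalues of $P_{\epsilon}$ can have their signs flipped by the $(-n)$-order perturbation $cF|P_{\epsilon}|^{-n}$, so $F_{\epsilon,c}-F$ is of finite rank.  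Moreover $P_{\epsilon,c}$ and $P_{\epsilon}$ have the same principal symbol, hence so do $|P_{\epsilon,c}|^{-n}$ and $|P_{\epsilon}|^{-n}$; as both operators have order $-n$, this forces $\Res|P_{\epsilon,c}|^{-n}=\Res|P_{\epsilon}|^{-n}$ and, using $F_{\epsilon,c}=F \bmod \Psi^{-\infty}$, also $\Res[F_{\epsilon,c}|P_{\epsilon,c}|^{-n}]=\Res[F|P_{\epsilon}|^{-n}]$.  Combining all of this with linearity of $\Res$ one obtains
\begin{align*}
\Res P_{\epsilon,c} &= \Res P+\epsilon\,\Res|P|+c\,\Res[F|P_{\epsilon}|^{-n}],\\
\Res|P_{\epsilon,c}| &= \Res|P|+\epsilon\,\Res P+c\,\Res|P_{\epsilon}|^{-n},\\
\Res\bigl[F_{\epsilon,c}|P_{\epsilon,c}|^{-n}\bigr] &= \Res[F|P_{\epsilon}|^{-n}].
\end{align*}

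The leading $c$-coefficient of the second line is $\Res|P_{\epsilon}|^{-n}$, which is strictly positive by Lemma~\ref{lem:DiffO.Res|P|n}, so that residue is a non-trivial polynomial in $c$.  For the first and third lines the decisive quantity is $g(\epsilon):=\Res[F|P_{\epsilon}|^{-n}]$, and showing that $g$ is a non-trivial function of $\epsilon$ is the heart of the proof.  Using the fibrewise spectral decomposition of the principal symbol $p_{1}(x,\xi)$ of $P$, and noting that on the positive (resp.\ negative) spectral part of $p_{1}$ the operator $|P_{\epsilon}|$ has principal symbol $(1+\epsilon)|p_{1}|$ (resp.\ $(1-\epsilon)|p_{1}|$), a direct computation of the degree-$(-n)$ component of the symbol of $F|P_{\epsilon}|^{-n}$ yields
\begin{equation*}
g(\epsilon)=(1+\epsilon)^{-n}B_{+}-(1-\epsilon)^{-n}B_{-},
\end{equation*}
where $B_{\pm}\geq 0$ are the integrals over $M$ and the cosphere bundle of the fibre traces of $|p_{1}|^{-n}$ weighted by the positive and negative spectral projections of $p_{1}$.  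Another application of Lemma~\ref{lem:DiffO.Res|P|n} gives $B_{+}+B_{-}=\Res|P|^{-n}>0$, so $g'(0)=-n(B_{+}+B_{-})<0$; in particular $g$ is a non-trivial real-analytic function on $(-1,1)$, and its zero set $Z$ is a discrete subset of $(-1,1)$.

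Now fix any $\epsilon\in(-1,1)\setminus Z$ with $|\epsilon|$ arbitrarily small.  Then $\Res[F_{\epsilon,c}|P_{\epsilon,c}|^{-n}]=g(\epsilon)\neq 0$ for every $c$, while $\Res P_{\epsilon,c}$ and $\Res|P_{\epsilon,c}|$ are non-trivial affine functions of $c$, each vanishing for at most one value of $c$.  Hence the three conditions of~(\ref{eq:Non-generic.conditions-sing-Pa}) hold for $P_{\epsilon,c}$ for all but at most two values of $c$, which proves the lemma.  The main technical obstacle is the verification of the three spectral identities of the first paragraph, in particular the fact that $F_{\epsilon,c}=F$ and $|P_{\epsilon,c}|=|P_{\epsilon}|+c|P_{\epsilon}|^{-n}$ modulo smoothing; both come down to an eigenvalue-by-eigenvalue analysis exploiting that $F$, $|P|$ and $|P_{\epsilon}|^{-n}$ all commute with $P$.
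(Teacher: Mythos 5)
Your proof is correct and follows essentially the same strategy as the paper: exploit the fact that $P_{\epsilon}$, $P_{\epsilon,c}$, $F$, $|P_{\epsilon}|$, etc.\ are all functions of $P$ in the Borel functional calculus, derive the three spectral identities $|P_{\epsilon}|=|P|+\epsilon P$, $F_{\epsilon,c}=F$ (mod smoothing), $|P_{\epsilon,c}|=|P_{\epsilon}|+c|P_{\epsilon}|^{-n}$, and then read off the three residues of~(\ref{eq:Non-generic.conditions-sing-Pa}) as affine functions of $c$ whose leading coefficients are controlled by $\epsilon$. The paper expresses $\Res[F|P_{\epsilon}|^{-n}]=u(\epsilon)\Res[F|P|^{-n}]+v(\epsilon)\Res|P|^{-n}$ and expands to first order in $\epsilon$; you equivalently write it as $(1+\epsilon)^{-n}B_{+}-(1-\epsilon)^{-n}B_{-}$ with $B_{\pm}=\Res[\Pi_{\pm}(P)|P|^{-n}]\geq 0$ and use discreteness of the zero set of a non-constant real-analytic function, which is in fact a slightly cleaner way to handle the case $\Res[F|P|^{-n}]=0$ (where one must exclude $\epsilon=0$) than the paper's phrasing. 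One minor overcomplication: for $c\geq 0$ the perturbation $cF|P_{\epsilon}|^{-n}$ flips \emph{no} eigenvalue signs at all (it strictly increases positive eigenvalues and decreases negative ones), so $F_{\epsilon,c}=F$ holds exactly rather than merely up to finite rank; the paper derives this directly from the eigenvalue formula $\lambda\mapsto\lambda+c\,\op{sign}(\lambda)|\lambda|^{-n}$.
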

\begin{proof}
    In the sequel, we denote by $\Pi_{0}(P)$ the orthogonal projection onto $\ker P$ and by $\Pi_{+}(P)$ (resp.,
    $\Pi_{-}(P)$) the orthogonal projection onto the positive (resp., negative) eigenspaces of $P$. Thus,
    \begin{gather}
        \Pi_{+}(P)+\Pi_{-}(P)=1-\Pi_{0}(P), \qquad F=\Pi_{+}(P)-\Pi_{-}(P),
         \label{eq:Generic.Pi0pm}\\
        \Pi_{\pm}(P)=\frac{1}{2}(1\pm F)-\frac{1}{2}\Pi_{0}(P).
       \label{eq:Generic.Pi0pmb}
    \end{gather}
    In particular, Eq.~(\ref{eq:Generic.Pi0pmb}) shows that $\Pi_{+}(P)$ and $\Pi_{-}(P)$ are zeroth order \psidos.

    We shall use similar notations for the corresponding projections associated to $P_{\epsilon}$ and $P_{\epsilon,c}$. In
    addition, we let $F_{\epsilon}:=P_{\epsilon}|P_{\epsilon}|^{-1}$ and
    $F_{\epsilon,c}:=P_{\epsilon,c}|P_{\epsilon,c}|^{-1}$ be the respective sign operators of $P_{\epsilon}$ and
    $P_{\epsilon,c}$.

   By definition,
    \begin{equation*}
        P_{\epsilon}=P+\epsilon|P|=(1+\epsilon F)P.
    \end{equation*}
    Thus $P$ and $P_{\epsilon}$ have same null space, and if $\lambda \in \op{Sp}P\setminus \{0\}$,
    then $(1+\epsilon \op{sign}(\lambda))\lambda$ is an eigenvalue of $P_{\epsilon}$ with same
    eigenspace and sign as $\lambda$. Therefore,
    \begin{equation}
      \Pi_{0}(P_{\epsilon})=\Pi_{0}(P), \qquad  \Pi_{\pm}(P_{\epsilon})=\Pi_{\pm}(P), \qquad F_{\epsilon}=F.
      \label{Non-generic.spectral-projections-Pe}
    \end{equation}It then follows that
    \begin{gather}
        |P_{\epsilon}|=F_{\epsilon}P_{\epsilon}=F(1+\epsilon F)P=(1+\epsilon F)|P|,  \\
        F|P_{\epsilon}|^{-n}=F(1+\epsilon F)^{-n}|P|^{-n}.\label{eq:Non-generic.FPe|Pe|-n}
    \end{gather}

   Let $\lambda \in \Sp P$. If $\lambda>0$ (resp., $\lambda <0$), then $(1+ \epsilon )^{-n}$ (resp., $(1- \epsilon
   )^{-n})$ is an eigenvalue of $(1+\epsilon F)^{-n}$ with eigenspace $\ker (P-\lambda)$. If $\lambda=0$, then $1$ is
   an eigenvalue of $(1+ \epsilon F)^{-n}$ with eigenspace $\ker P$. It follows that
   \begin{equation*}
      (1+ \epsilon F)^{-n} = (1+\epsilon)^{-n}\Pi_{+}(P) + (1-\epsilon)^{-n}\Pi_{-}(P)+\Pi_{0}(P).
  \end{equation*}

  Let $u(\epsilon)$ and $v(\epsilon)$ be the functions on $(-1,1)$ defined by
  \begin{equation*}
      u(\epsilon)=\frac{1}{2}\left( (1+\epsilon)^{-n}+(1-\epsilon)^{-n}\right) \quad  \text{and}\quad v(\epsilon)=\frac{1}{2}\left(
      (1+\epsilon)^{-n}-(1-\epsilon)^{-n}\right),
  \end{equation*}so that $ (1\pm \epsilon)^{-n}=u(\epsilon)\pm v(\epsilon)$.
  Then $(1+\epsilon)^{-n}\Pi_{+}(P) + (1-\epsilon)^{-n}\Pi_{-}(P)$ is equal to
  \begin{multline}
        \left( u(\epsilon)+v(\epsilon)\right) \Pi_{+}(P)+
      \left( u(\epsilon)-v(\epsilon)\right) \Pi_{-}(P) \\ =
      u(\epsilon) \left( \Pi_{+}(P)+\Pi_{-}(P)\right) + v(\epsilon)\left( \Pi_{+}(P)-\Pi_{-}(P)\right) \\
      =u(\epsilon)\left( 1-\Pi_{0}(P)\right) +v(\epsilon)F.
      \label{eq:Non-generic.ueve}
  \end{multline}

  Combining~(\ref{eq:Non-generic.FPe|Pe|-n}) and~(\ref{eq:Non-generic.ueve}) we get
  \begin{equation}
      F|P_{\epsilon}|^{-n}=F\left( u(\epsilon)\left( 1-\Pi_{0}(P)\right)
      +v(\epsilon)F\right)|P|^{-n}=u(\epsilon)F|P|^{-n}+v(\epsilon)|P|^{-n}.
      \label{eq:Non-generic.F|Pe|-n}
  \end{equation}Therefore, as $\epsilon \rightarrow 0$, we have
  \begin{align*}
      \Res \left[ F|P_{\epsilon}|^{-n}\right] &= u(\epsilon)\Res\left[  F|P|^{-n}\right]  +v(\epsilon)\Res |P|^{-n}\\ &= \left(1
      +\op{O}(\epsilon^{2})\right) \Res\left[  F|P|^{-n}\right]  + \left( -n\epsilon +\op{O}(\epsilon^{3})\right) \Res 
      |P|^{-n} \\ &=
      \Res\left[  F|P|^{-n}\right] -n\epsilon \Res |P|^{-n} +\op{O}(\epsilon^{2}).
  \end{align*}By Lemma~\ref{lem:DiffO.Res|P|n} the noncommutative residue $\Res |P|^{-n}$ is always~$>0$, so
  we see that if $\epsilon$ is small enough, then $\Res F|P_{\epsilon}|^{-n}$ is non-zero.

  From now on we choose $\epsilon$ so that $\Res  F|P_{\epsilon}|^{-n}\neq 0$. In view of~(\ref{eq:Non-generic.PePec}) we have
  \begin{equation*}
      \Res P_{\epsilon,c}=\Res P_{\epsilon}+c\Res F|P_{\epsilon}|^{-n},
  \end{equation*}so we see that for all values of $c$, except maybe one,  the noncommutative residue $ \Res
  P_{\epsilon,c}$ is non-zero.

  The operators $P_{\epsilon}$ and $P_{\epsilon,c}=P_{\epsilon}+ cF|P_{\epsilon}|^{-n}$ have the same null
  space. If $\lambda$ is a non-zero eigenvalue of $P_{\epsilon}$, then
  $\lambda+c \op{sign}(\lambda)|\lambda|^{-n}$ is an eigenvalue of
  $P_{\epsilon,c}$ with eigenspace $\ker (P_{\epsilon}-\lambda)$ and the same sign
  as $\lambda$. Combining this with~(\ref{Non-generic.spectral-projections-Pe}) shows that
  \begin{equation}
   \Pi_{0}(P_{\epsilon,c})=\Pi_{0}(P_{\epsilon})=\Pi_{0}(P) \qquad \text{and} \qquad   F_{\epsilon,c}=F_{\epsilon}=F.
   \label{Non-generic.spectral-projections-Pec}
  \end{equation}Therefore,  
  \[
  |P_{\epsilon,c}|=F_{\epsilon,c}P_{\epsilon,c}=F_{\epsilon}\left(P_{\epsilon}+cF_{\epsilon}|P_{\epsilon}|^{-n}\right)=
     |P_{\epsilon}|+c|P_{\epsilon}|^{-n},
 \] where we used \eqref{eq:NCR.squareF}, and hence
  \begin{equation*}
      \Res |P_{\epsilon,c}|= \Res  |P_{\epsilon}| + c \Res |P_{\epsilon}|^{-n}.
  \end{equation*}Since $\Res |P_{\epsilon}|^{-n}\neq 0$, we see that for all, except maybe one value of $c$, the
  noncommutative residue $ \Res |P_{\epsilon,c}|$ is non-zero.

  Since $P_{\epsilon}$ and $P_{\epsilon,c}$ have the same principal symbols, the principal symbols of $|P_{\epsilon}|^{-n}$ and
  $|P_{\epsilon,c}|^{-n}$ agree, i.e., those
 operators differ by a \psido\ of order~$\leq -(n+1)$. As $F_{\epsilon,c}=F$, we see that
 $F_{\epsilon,c}|P_{\epsilon,c}|^{-n}$ and $F|P_{\epsilon}|^{-n}$ agree modulo an element in
 $\Psi^{-(n+1)}(M,E)$, and hence $\Res F_{\epsilon,c}|P_{\epsilon,c}|^{-n} =\Res
 F|P_{\epsilon}|^{-n}\neq 0$.

It follows from all this that if  $\epsilon$ is small enough, then, for all but finitely many values of $c$,
 all three conditions in~(\ref{eq:Non-generic.conditions-sing-Pa}) are satisfied by $P_{\epsilon,c}$. The lemma is proved.
\end{proof}

Combining Lemma~\ref{lem:Non-generic.residues-eta-Pa2} and Lemma~\ref{lem:Non-generic.conditions-etaPa-Pec} we immediately get

\begin{proposition}\label{prop:PsiDOs1-sing-eta}
    If $\epsilon$ is small enough, then, for all but countably many values of $a$ and all but finitely many values of
    $c$, the eta function $\eta(P_{\epsilon,c}+a;s)$ and the zeta functions
$\zeta_{\updown}(P_{\epsilon,c}+a;s)$ and $\zeta(|P_{\epsilon,c}+a|;s)$ are singular at {all} the non-zero integers~$\leq
n$, that is, at all points of the admissible set~(\ref{eq:1stPsiDOs.admissible-set}).
\end{proposition}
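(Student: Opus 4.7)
The plan is simply to chain Lemma~\ref{lem:Non-generic.residues-eta-Pa2} after Lemma~\ref{lem:Non-generic.conditions-etaPa-Pec}, since the latter is engineered to manufacture exactly the three residue non-vanishing hypotheses that the former requires. First I would invoke Lemma~\ref{lem:Non-generic.conditions-etaPa-Pec} to fix $\epsilon\in(-1,1)$ sufficiently small (the threshold depending only on $P$) and to extract a finite exceptional set $C\subset[0,\infty)$ such that, for every $c\in[0,\infty)\setminus C$, the operator $P_{\epsilon,c}$ simultaneously satisfies the three conditions in~(\ref{eq:Non-generic.conditions-sing-Pa}).

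Next, for each such admissible $c$ I would observe that $P_{\epsilon,c}$ is itself a selfadjoint elliptic first-order \psido\ with the same principal symbol as $P$, so it belongs to the class of operators for which Lemma~\ref{lem:Non-generic.residues-eta-Pa2} is formulated. Applying that lemma with $P$ replaced by $P_{\epsilon,c}$ then produces a countable exceptional set $A_{\epsilon,c}\subset\R$ such that, for every $a\in\R\setminus A_{\epsilon,c}$, the functions $\eta(P_{\epsilon,c}+a;s)$, $\zetaupdown(P_{\epsilon,c}+a;s)$ and $\zeta(|P_{\epsilon,c}+a|;s)$ all have simple pole singularities at every non-zero integer~$\leq n$. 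By~(\ref{eq:1stPsiDOs.admissible-set}) this is exactly the admissible set $\Sigma$, which is the desired conclusion.

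There is essentially no obstacle in this argument, as all of the substantive analytic work has already been carried out in the two preceding lemmas; the proof is the ``immediate combination'' advertised just before the statement. The only point worth flagging is the order of the quantifiers: the countable set $A_{\epsilon,c}$ of ``bad'' values of $a$ genuinely depends on $c$, but this dependence is harmless because the proposition asserts only that, for each admissible $c$ separately, countably many $a$ have to be excluded, which is exactly what chaining Lemmas~\ref{lem:Non-generic.conditions-etaPa-Pec} and~\ref{lem:Non-generic.residues-eta-Pa2} delivers.
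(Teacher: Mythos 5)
Your proposal is exactly the paper's argument: the paper states the Proposition with the words ``Combining Lemma~\ref{lem:Non-generic.residues-eta-Pa2} and Lemma~\ref{lem:Non-generic.conditions-etaPa-Pec} we immediately get,'' which is precisely the chaining you describe, and your reading of the quantifiers (the countable exceptional set of $a$'s may depend on $c$) matches the natural and intended interpretation.
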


Observe that from~(\ref{eq:Non-generic.PePec}) and~(\ref{eq:Non-generic.F|Pe|-n}) we get
\begin{gather*}
    P_{\epsilon,c} + a=P+T_{\epsilon,c,a},\\
    T_{\epsilon,c,a}:=\epsilon |P|+cF|P_{\epsilon}|^{-n} +a= \epsilon |P|+ cu(\epsilon)F|P|^{-n}+cv(\epsilon)|P|^{-n}+a.
\end{gather*}Clearly, as $(\epsilon,c,a)\rightarrow (0,0,0)$ the operator $T_{\epsilon,c,a}$ converges to $0$ in
$\Psi^{1}(M,E)$. Therefore, we obtain

\begin{corollary}\label{cor:PsiDOs1-sing-eta}
    We always can perturbate $P$ by an arbitrary small selfadjoint element of $\Psi^{1}(M,E)$ in such way that the eta
    function $\eta(P;s)$ and the zeta functions
$\zeta_{\updown}(P;s)$ and $\zeta(|P|;s)$ become singular at all points of the admissible set~(\ref{eq:1stPsiDOs.admissible-set}).
\end{corollary}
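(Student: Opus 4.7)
The plan is to read the corollary as a direct consequence of Proposition~\ref{prop:PsiDOs1-sing-eta} together with the two displayed identities immediately preceding it. Writing
\begin{equation*}
  T_{\epsilon,c,a} := \epsilon|P|+cu(\epsilon)F|P|^{-n}+cv(\epsilon)|P|^{-n}+a,
\end{equation*}
we have $P_{\epsilon,c}+a = P+T_{\epsilon,c,a}$, so the task reduces to showing that $T_{\epsilon,c,a}$ can be made arbitrarily small in the Fr\'echet topology of $\Psi^{1}(M,E)^{\text{sa}}$ while still satisfying the genericity hypotheses of Proposition~\ref{prop:PsiDOs1-sing-eta}.

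First I would check that $T_{\epsilon,c,a}$ is indeed selfadjoint and lies in $\Psi^{1}(M,E)$: $|P|$ is selfadjoint of order~$1$; $F$ and $|P|^{-n}$ commute and are both selfadjoint, hence $F|P|^{-n}$ is a selfadjoint element of $\Psi^{-n}(M,E)\subset \Psi^{1}(M,E)$; and a real constant $a$ lies in $\Psi^{0}(M,E)\subset \Psi^{1}(M,E)$. Next I would verify the continuity statement: as $\epsilon\to 0$ the term $\epsilon|P|$ tends to $0$ in $\Psi^{1}(M,E)$, while $u(\epsilon)$ and $v(\epsilon)$ stay bounded on, say, $|\epsilon|\leq \tfrac{1}{2}$, so for such $\epsilon$ the remaining terms tend to $0$ in $\Psi^{-n}(M,E)\subset \Psi^{1}(M,E)$ as $(c,a)\to (0,0)$.

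Now let $\cW$ be any open neighborhood of the origin in $\Psi^{1}(M,E)^{\text{sa}}$. Fix $\epsilon_{0}>0$ small enough so that the conclusion of Proposition~\ref{prop:PsiDOs1-sing-eta} applies for every $\epsilon$ with $|\epsilon|<\epsilon_{0}$. Choose $\epsilon\in (0,\epsilon_{0})$ small enough that $\epsilon|P|$ lies in $\tfrac{1}{2}\cW$ (using that the open neighborhoods of $0$ are absorbing in a Fr\'echet topological vector space). By the proposition, the set of pairs $(c,a)\in [0,\infty)\times \R$ for which the eta and zeta functions of $P_{\epsilon,c}+a$ are singular at every non-zero integer~$\leq n$ is the complement of a finite union of lines $\{c=c_{i}\}$ and countably many lines $\{a=a_{j}\}$, hence is dense near the origin. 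Using the continuity observation above, I can then pick such a pair $(c,a)$ so close to $(0,0)$ that $cu(\epsilon)F|P|^{-n}+cv(\epsilon)|P|^{-n}+a$ lies in $\tfrac{1}{2}\cW$; then $T_{\epsilon,c,a}\in \cW$, and $R:=T_{\epsilon,c,a}$ is the desired perturbation.

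I do not expect a real obstacle: the only subtle point is to organize the order in which the three parameters are chosen (first shrink $\epsilon$ to a value that simultaneously satisfies the proposition's hypothesis and makes the order-$1$ term $\epsilon|P|$ small, then shrink $(c,a)$ within the cofinite/cocountable good set to handle the lower-order and constant contributions). The fact that the bad sets for $c$ and $a$ are finite and countable respectively guarantees they cannot exhaust a neighborhood of the origin.
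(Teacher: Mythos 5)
Your proof is correct and follows the paper's own approach, which consists precisely of the observation stated immediately before the corollary, namely that $P_{\epsilon,c}+a = P + T_{\epsilon,c,a}$ with $T_{\epsilon,c,a}\to 0$ in $\Psi^{1}(M,E)$ as $(\epsilon,c,a)\to(0,0,0)$, combined with Proposition~\ref{prop:PsiDOs1-sing-eta}. One minor imprecision: the countable bad set of values of $a$ depends on $(\epsilon,c)$, so the exceptional set in the $(c,a)$-plane need not be a union of horizontal lines $\{a=a_{j}\}$; but since your argument only requires a single good pair $(c,a)$ near the origin (first pick a good small $c$ avoiding the finite bad set, then a good small $a$ avoiding the countable bad set attached to that $c$), the reasoning is unaffected.
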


\section{General Case}\label{sec:higher-orderPsiDOs}
In this section, we extend the results of the previous section to operators of higher order. In the sequel, if
$P:C^{\infty}(M,E)\rightarrow C^{\infty}(M,E)$ is any given selfadjoint elliptic \psido, then we shall denote
by $F(P)$ its sign.

Throughout this section we let $P:C^{\infty}(M,E)\rightarrow C^{\infty}(M,E)$ be a selfadjoint order elliptic \psido\
of order $m$, $m \in \N$. Then the set of admissible points at which the eta and zeta functions of $P$ are allowed to have
singularities is
\begin{equation}
    \Sigma:=\biggl\{\frac{k}{m}; \ k \in \Z, \ k\leq n \biggr\}\setminus \{0\}.
    \label{eq:general.admissible-set}
\end{equation}

Let $Q:C^{\infty}(M,E)\rightarrow C^{\infty}(M,E)$ be the operator defined by
\begin{equation*}
    Q:=F(P)|P|^{\frac{1}{m}}.
\end{equation*}Then $Q$ is a selfadjoint elliptic first-order \psido\  and we have
\begin{equation}
    F(Q)=F(P) , \qquad |P|=|Q|^{m}, \qquad P=F(Q)|Q|^{m}.
    \label{eq:Non-generic.Q-P}
\end{equation}

\begin{lemma}\label{lem:General.residues-eta-zeta-PQ}
    Let $k$ be a non-zero integer~$\leq n$. Then the following are equivalent:
    \begin{enumerate}
        \item[(i)] The eta function $\eta(P;s)$ and the zeta functions $\zeta_{\updown}(P;s)$ and $\zeta(|P|;s)$ are
        singular at $s=\frac{k}{m}$.

        \item[(ii)]   The eta function $\eta(Q;s)$ and the zeta functions $\zeta_{\updown}(Q;s)$ and $\zeta(|Q|;s)$ are
        singular at $s=k$.

        \item[(iii)] Both $ \Res |P|^{-\frac{k}{m}}$ and $ \Res\left[
        F(P)|P|^{-\frac{k}{m}}\right]$ are non-zero.
    \end{enumerate}
\end{lemma}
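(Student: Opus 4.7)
The plan is to establish (iii) $\Leftrightarrow$ (i) and (iii) $\Leftrightarrow$ (ii). Set $A:=\Res|P|^{-k/m}$ and $B:=\Res[F(P)|P|^{-k/m}]$. Since $|P|=|Q|^{m}$, functional calculus gives $|P|^{-k/m}=|Q|^{-k}$; combined with $F(P)=F(Q)$ from~(\ref{eq:Non-generic.Q-P}), this also yields $A=\Res|Q|^{-k}$ and $B=\Res[F(Q)|Q|^{-k}]$. By~(\ref{eq:Background.Residues-zeta|P|}) and~(\ref{eq:Background.Residues-eta}), the conditions $A\neq 0$ and $B\neq 0$ are, respectively, equivalent to $\zeta(|P|;s)$ and $\eta(P;s)$ being singular at $k/m$, and also to $\zeta(|Q|;s)$ and $\eta(Q;s)$ being singular at $s=k$. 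So (iii) is automatically equivalent to the clauses in (i) and (ii) concerning $\zeta(|P|;s),\eta(P;s),\zeta(|Q|;s),\eta(Q;s)$; it remains only to analyze $\zeta_{\updown}(P;s)$ at $s=k/m$ and $\zeta_{\updown}(Q;s)$ at $s=k$.

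The case of $Q$ is routine. Since $k\in\Z\setminus\{0\}$, formula~(\ref{eq:NCR.Pudpdown-integers}) gives $Q_{\updown}^{-k}=Q^{-k}=F(Q)^{k}|Q|^{-k}$, and then~(\ref{eq:DiffO-signPk}) yields $\Res Q^{-k}=A$ if $k$ is even and $\Res Q^{-k}=B$ if $k$ is odd. So under (iii) the residue of $\zeta_{\updown}(Q;s)$ at $s=k$ is nonzero, proving (iii) $\Rightarrow$ (ii); the converse is trivial.

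For (i), I would compute $P_{\updown}^{-k/m}$ from the spectral theorem. For $\lambda>0$ one has $\lambda_{\updown}^{-k/m}=|\lambda|^{-k/m}$, while for $\lambda<0$, using $\arg_{\updown}\lambda=-\pi$, one gets $\lambda_{\updown}^{-k/m}=e^{ik\pi/m}|\lambda|^{-k/m}$. Expanding the projections as $\Pi_{\pm}(P)=\tfrac{1}{2}(1\pm F(P)-\Pi_{0}(P))$, noting that $|P|^{-k/m}\Pi_{0}(P)=0$, and using that smoothing operators have zero noncommutative residue, one obtains
\begin{equation*}
    \Res P_{\updown}^{-k/m}=\frac{1+e^{ik\pi/m}}{2}\,A+\frac{1-e^{ik\pi/m}}{2}\,B=\frac{A+B}{2}+\frac{A-B}{2}e^{ik\pi/m}.
\end{equation*}
The implication (i) $\Rightarrow$ (iii) is immediate, since (i) already forces $A,B\neq 0$ via the singularities of $\zeta(|P|;s)$ and $\eta(P;s)$.

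The main subtlety, which I would address last, is ruling out accidental cancellation in the above formula when $A,B$ are both nonzero. This reduces to a short trigonometric case analysis: if $k/m\notin\Z$ then $\sin(k\pi/m)\neq 0$, so the imaginary part of $\Res P_{\updown}^{-k/m}$ vanishes only when $A=B$, in which case the real part reduces to $A\neq 0$; if $k/m\in\Z$ then $e^{ik\pi/m}=(-1)^{k/m}$ and the residue equals $A$ or $B$ according to parity. In every case $\Res P_{\updown}^{-k/m}\neq 0$, which together with $A,B\neq 0$ gives (i), completing the equivalence (i) $\Leftrightarrow$ (iii).
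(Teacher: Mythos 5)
Your approach is essentially the same as the paper's: reduce $(iii)\Leftrightarrow(ii)$ to formula~(\ref{eq:DiffO-signPk}), then for $(iii)\Rightarrow(i)$ compute $\Res P_{\updown}^{-k/m}$ spectrally in terms of $A:=\Res|P|^{-k/m}$ and $B:=\Res[F(P)|P|^{-k/m}]$ and argue that the resulting combination cannot vanish. The first two paragraphs are fine (modulo the slip $\arg_{\downarrow}\lambda=+\pi$ for $\lambda<0$, so that $P_{\downarrow}^{-k/m}$ carries $e^{-ik\pi/m}$ rather than $e^{ik\pi/m}$; this changes nothing of substance).

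The gap is in the final ``short trigonometric case analysis.'' You read off the imaginary part of $\tfrac{A+B}{2}+\tfrac{A-B}{2}e^{ik\pi/m}$ as $\tfrac{A-B}{2}\sin(k\pi/m)$ and conclude that vanishing forces $A=B$. This only makes sense if $A$ and $B$ are \emph{real} numbers, which is neither stated nor proved in your proposal, and it is not automatic: the noncommutative residue of a general classical \psido\ is a complex number. If $A,B$ are allowed to be complex and nonzero, the combination $\tfrac{A+B}{2}+\tfrac{A-B}{2}e^{ik\pi/m}$ can certainly vanish, so your case analysis does not close. The paper handles exactly this point by showing, via $\Tr[|P|^{-s}]=\overline{\Tr[|P|^{-\bar s}]}$ for $\Re s>n/m$ and unique meromorphic continuation of $\TR$, that $\Res|P|^{-\sigma}$ and $\Res[F|P|^{-\sigma}]$ are real (this uses that $|P|^{-s}$ and $F|P|^{-s}$ are selfadjoint for real $s$). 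You need to supply that lemma, or an equivalent argument, before the trigonometric step can be invoked.
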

\begin{proof}
    Set $F=F(P)$. As $F=F(Q)$ and $|P|^{\frac{k}{m}}=|Q|^{k}=F(Q)^{k}Q^{k}$, by \eqref{eq:DiffO-signPk} we see that (iii) is equivalent to having
    \begin{equation*}
         \Res Q^{-k}\neq 0 \qquad \text{and} \qquad  \Res\left[ F(Q)Q^{-k}\right]\neq 0.
    \end{equation*}These conditions are exactly the conditions
    in~(\ref{eq:1stPsiDOs.conditions-singularities-eta-zetaP+a}) for $Q$ in the special case $a=0$.
    Thus, the equivalence
    between (ii) and (iii) follows from the beginning of the proof of
    Lemma~\ref{lem:Non-generic.residues-eta-Pa2}.

    It remains to prove that (i) and (iii) are equivalent.  Using~(\ref{eq:Background.Residues-zeta|P|})
    and~(\ref{eq:Background.Residues-eta}) we get
\begin{equation*}
    m\Res_{s=\frac{k}{m}}\zeta(|P|;s)= \Res |P|^{-\frac{k}{m}} ,\quad
    m\Res_{s=\frac{k}{m}}\eta(P;s)= \Res\left[ F|P|^{-\frac{k}{m}}\right].
\end{equation*}Therefore $\eta(P;s)$ and $\zeta(|P|;s)$ are
        singular at $s=\frac{k}{m}$ if and only if (iii) holds.

        To complete the proof it enough to show that if (iii) holds, then the zeta functions $\zeta_{\up}(P;s)$ and $\zeta_{\down}(P;s)$ too are
        singular at $s=\frac{k}{m}$.

        Set $\sigma =\frac{k}{m}$. By the very definition of the powers $P_{\updown}^{-\sigma}$
        (cf.~Section~\ref{sec.EFNCR}), we have
\begin{gather*}
  P_{\uparrow}^{-\sigma}=\Pi_{+}(P)|P|^{-\sigma}+e^{i\pi \sigma}\Pi_{-}(P)|P|^{-\sigma}, \\
      P_{\downarrow}^{-\sigma}=\Pi_{+}(P)|P|^{-\sigma}+e^{-i\pi \sigma}\Pi_{-}(P)|P|^{-\sigma},
\end{gather*}where $\Pi_{+}(P)$ (resp.,
    $\Pi_{-}(P)$) is the orthogonal projection onto the positive (resp., negative) eigenspaces of $P$.  Combining this
    with~(\ref{eq:Generic.Pi0pmb}) we get
    \begin{align*}
        P_{\up}^{-\sigma} & = \left\{\frac{1}{2}(1+ F)-\Pi_{0}(P)\right\}|P|^{-\sigma}+e^{i\pi \sigma}\left\{\frac{1}{2}(1- F)-\Pi_{0}(P)\right\}|P|^{-\sigma} \\
        & =\frac{1}{2}(1+e^{i\pi \sigma})|P|^{-\sigma} + \frac{1}{2}(1-e^{i\pi \sigma})F|P|^{-\sigma}.
    \end{align*}
    As  $\frac{1}{2}(1\pm e^{i\pi \sigma})= e^{i\pi \frac{\sigma}{2}} \frac{1}{2}(e^{-i\pi \frac{\sigma}{2}} \pm e^{i\pi
\frac{\sigma}{2}})$, we  obtain
\begin{equation*}
    P_{\up}^{-\sigma}= e^{i\pi \frac{\sigma}{2}} \left\{ \cos\left(\frac{\pi \sigma}{2} \right)|P|^{-\sigma} -i \sin
    \left(\frac{\pi \sigma}{2} \right)
        F|P|^{-\sigma}\right\}.
\end{equation*}
Combining this with~(\ref{eq:Background.Residues-zeta}) then gives
\begin{align}
    m.\Res_{s=\sigma} \zeta_{\up}(P;s) & = \Res P_{\up}^{-\sigma} \nonumber \\ &= e^{ i\pi \frac{\sigma}{2}} \left\{
    \cos\left(\frac{\pi \sigma}{2} \right)
    \Res \left[ |P|^{-\sigma}\right] - i \sin \left(\frac{\pi \sigma}{2} \right)
       \Res \left[  F|P|^{-\sigma}\right]\right\}.
       \label{eq:General.residues-zetaup}
\end{align}

We claim that $\Res |P|^{-\sigma}$ and $ \Res \left[  F|P|^{-\sigma}\right]$ are real numbers. Indeed,  as for
$\Re s>\frac{n}{m}$ we have
$\Tra \left[ |P|^{-s}\right] = \overline{ \Tra \left[ (|P|^{-s})^{*}\right] }= \overline{ \Tra \left[
    |P|^{-\overline{s}}\right] }$,  the meromorphic functions $\TR |P|^{-s} $ and $ \overline{ \TR
   |P|^{-\overline{s}} }$ agree. Combining this with~(\ref{eq:Background.Residues-zeta|P|}) then shows that
\begin{equation*}
 \Res  |P|^{-\sigma}=m.\Res_{s=\sigma}  \TR |P|^{-s} =  m.\Res_{s=\sigma} \overline{ \TR
   P|^{-\overline{s}} }= \overline{\Res  |P|^{-\sigma}},
\end{equation*}proving that $\Res |P|^{-\sigma}$ is a real number. A similar argument shows that $ \Res \left[
F|P|^{-\sigma}\right]$ too is a real number.

Since $\Res  |P|^{-\sigma}$ and $ \Res \left[  F|P|^{-\sigma}\right]$ are real numbers, Eq.~(\ref{eq:General.residues-zetaup})
shows that the real and imaginary parts of $e^{- i\pi \frac{\sigma}{2}}  \Res_{s=\sigma} \zeta_{\up}(P;s)$ are
given by
\begin{gather*}
    m .\Re \left(e^{-i\pi \frac{\sigma}{2}}  \Res_{s=\sigma} \zeta_{\up}(P;s)\right)=  \cos\left(\frac{\pi
    \sigma}{2} \right)
    \Res  |P|^{-\sigma},  \\
     m .\Im \left(e^{-i\pi \frac{\sigma}{2}}  \Res_{s=\sigma} \zeta_{\up}(P;s)\right)= - \sin(\frac{\pi \sigma}{2} )
    \Res \left[ F|P|^{-\sigma}\right] .
\end{gather*}Since $\cos(\frac{\pi \sigma}{2} ) $ and $\sin(\frac{\pi \sigma}{2} ) $ cannot be simultaneously zero, we
deduce that if both $\Res  |P|^{-\sigma}$ and $\Res \left[ F|P|^{-\sigma}\right] $ are  non-zero, then
$\Res_{s=\sigma} \zeta_{\up}(P;s)$ must be non-zero. This means that if (iii) holds, then
$\zeta_{\up}(P;s)$ is
        singular at $s=\frac{k}{m}$.

        A similar argument shows that if (iii) holds, then the zeta function $\zeta_{\down}(P;s)$ too is
        singular at $s=\frac{k}{m}$. This completes the proof.
\end{proof}

For $(\epsilon,c)\in (-1,1)\times [0,\infty)$ define $Q_{\epsilon}$ and $Q_{\epsilon,c}$ as
in~(\ref{eq:Non-generic.PePec}), i.e.,
\begin{equation*}
    Q_{\epsilon}:= Q+\epsilon |Q|, \qquad Q_{\epsilon,c}:=Q_{\epsilon}+cF(Q)|Q_{\epsilon}|^{-n}.
\end{equation*}
Let $a\in \R$. The operator $Q_{\epsilon,c}+a$ is a selfadjoint elliptic first-order \psido, so we define a selfadjoint elliptic \psido\ of order $m$ by letting
\begin{equation*}
    P_{\epsilon,c,a}:=F(Q_{\epsilon,c}+a)|Q_{\epsilon,c}+a|^{m}.
\end{equation*}

\begin{proposition}\label{prop:higherPsiDOs-sing-eta1}
 \begin{enumerate}
     \item  We can make $P_{\epsilon,c,a}$ be arbitrary close to $P$ by picking up any sufficiently small data
     $(\epsilon,c,a)$.

     \item  If $\epsilon$ is small enough, then, for all but countably many values of $a$ and all but finitely many values of
    $c$, the eta function $\eta(P_{\epsilon,c,a};s)$ and the zeta functions $\zeta_{\updown}(P_{\epsilon,c,a};s)$ and $\zeta(|P_{\epsilon,c,a}|;s)$
    are singular at all points of the admissible set~(\ref{eq:general.admissible-set}).  \end{enumerate}
\end{proposition}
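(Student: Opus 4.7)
The plan is to reduce everything to the first-order setting of Proposition~\ref{prop:PsiDOs1-sing-eta} by exploiting the structural equivalence in Lemma~\ref{lem:General.residues-eta-zeta-PQ}. The key observation is that, by construction, $\tilde{Q}:=Q_{\epsilon,c}+a$ is precisely the first-order operator associated with $P_{\epsilon,c,a}$ in the sense of~\eqref{eq:Non-generic.Q-P}. Indeed, $F(\tilde{Q})$ commutes with $|\tilde{Q}|^{m}$ and satisfies $F(\tilde{Q})^{2}=1-\Pi_{0}(\tilde{Q})$; moreover $|\tilde{Q}|^{2m}$ annihilates $\ker\tilde{Q}$ (since $m\geq 1$), so that
\begin{equation*}
P_{\epsilon,c,a}^{2}=F(\tilde{Q})^{2}|\tilde{Q}|^{2m}=\bigl(1-\Pi_{0}(\tilde{Q})\bigr)|\tilde{Q}|^{2m}=|\tilde{Q}|^{2m}.
\end{equation*}
Hence $|P_{\epsilon,c,a}|=|\tilde{Q}|^{m}$ and $F(P_{\epsilon,c,a})=F(\tilde{Q})$, so that $F(P_{\epsilon,c,a})|P_{\epsilon,c,a}|^{1/m}=F(\tilde{Q})|\tilde{Q}|=\tilde{Q}$.

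For part~(2), I would apply Lemma~\ref{lem:General.residues-eta-zeta-PQ} to $P_{\epsilon,c,a}$: for each non-zero integer $k\leq n$, the functions $\eta(P_{\epsilon,c,a};s)$, $\zetaupdown(P_{\epsilon,c,a};s)$, and $\zeta(|P_{\epsilon,c,a}|;s)$ are singular at $s=k/m$ if and only if the corresponding functions of $\tilde{Q}=Q_{\epsilon,c}+a$ are singular at $s=k$. Then Proposition~\ref{prop:PsiDOs1-sing-eta} applied to the first-order operator $Q$ yields that, for $\epsilon$ small enough, all but countably many $a$ and all but finitely many $c$ make the eta and zeta functions of $Q_{\epsilon,c}+a$ singular at every non-zero integer~$\leq n$. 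This exhausts the admissible set~\eqref{eq:general.admissible-set} and gives part~(2).

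For part~(1), I would argue via continuity of the functional calculus. As $(\epsilon,c,a)\to(0,0,0)$ the operator $\tilde{Q}=Q_{\epsilon,c}+a$ converges to $Q$ in $\Psi^{1}(M,E)$, by the same argument as the one following Proposition~\ref{prop:PsiDOs1-sing-eta}. On a sufficiently small neighborhood of $Q$ in the space of selfadjoint elliptic first-order \psidos, Seeley's theory guarantees that the map $R\mapsto F(R)|R|^{m}$ is continuous into $\Psi^{m}(M,E)$. Since its value at $R=Q$ equals $F(Q)|Q|^{m}=F(P)|P|=P$ by~\eqref{eq:Non-generic.Q-P}, we conclude that $P_{\epsilon,c,a}\to P$ in $\Psi^{m}(M,E)$.

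The main obstacle is precisely this continuity claim in part~(1): one must check that the holomorphic dependence of complex powers on the underlying operator extends to the relevant Fr\'echet topologies, despite the fact that the null space and spectral projections of $\tilde{Q}$ may jump with the perturbation parameters. This is handled exactly as in the proof of Lemma~\ref{lem:Non-generic.conditions-etaPa-Pec}, where the effect of such perturbations on the kernel and sign operator was already tracked explicitly. Once this continuity is in hand, the rest of the argument is purely algebraic and follows immediately from the equivalence in Lemma~\ref{lem:General.residues-eta-zeta-PQ}.
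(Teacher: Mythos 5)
Your treatment of part~(2) is exactly the paper's: identify $F(P_{\epsilon,c,a})|P_{\epsilon,c,a}|^{1/m}=Q_{\epsilon,c}+a$, invoke Lemma~\ref{lem:General.residues-eta-zeta-PQ} to transfer singularities at $s=k/m$ to singularities at $s=k$ for the first-order operator, and conclude by Proposition~\ref{prop:PsiDOs1-sing-eta}. No issues there.

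Part~(1) has a genuine gap. You correctly flag that the difficulty is the behaviour of $F(Q_{\epsilon,c}+a)$ as $(\epsilon,c,a)\to(0,0,0)$, but you then dismiss it by asserting that ``Seeley's theory guarantees that the map $R\mapsto F(R)|R|^{m}$ is continuous into $\Psi^{m}(M,E)$.'' That claim is not a consequence of Seeley's theory, and indeed the paper explicitly remarks that $F(Q_{\epsilon,c}+a)$ does \emph{not} depend continuously on the parameters. When $Q$ has a nontrivial kernel, $F(Q_{\epsilon,c}+a)$ jumps by $\pm\Pi_{0}(Q_{\epsilon,c})$ as $a$ crosses $0$, so $F$ is genuinely discontinuous; the heuristic that the discontinuity is ``killed'' by the vanishing of $|R|^{m}$ on the kernel is the right intuition but it is precisely what needs to be proved, not what Seeley hands you for free. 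Moreover, the reference to Lemma~\ref{lem:Non-generic.conditions-etaPa-Pec} as ``handling'' this is misplaced: that lemma tracks the spectral projections of $P_{\epsilon}$ and $P_{\epsilon,c}$ (which leave $\Pi_{0}$ and $F$ unchanged) but does not touch the perturbation by $a$, which is the one that moves the sign operator. The paper circumvents the whole issue by a direct computation: choosing $0<a<(1-|\epsilon|)\mu$ with $\mu=\|Q^{-1}\|^{-1}$ so that $Q_{\epsilon,c}$ has no nonzero spectrum in $[-a,a]$, it obtains $F(Q_{\epsilon,c}+a)=F(Q_{\epsilon,c})+\Pi_{0}(Q_{\epsilon,c})$ via~(\ref{eq:Non-generic.Fa}), and then expands
\begin{equation*}
    P_{\epsilon,c,a}=\bigl(F(Q_{\epsilon,c})+\Pi_{0}(Q_{\epsilon,c})\bigr)^{m+1}(Q_{\epsilon,c}+a)^{m}
    =F(P)^{m+1}(Q_{\epsilon,c}+a)^{m}+a^{m}\Pi_{0}(P),
\end{equation*}
using~(\ref{Non-generic.spectral-projections-Pec}) and~(\ref{eq:Non-generic.Q-P}). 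Now the discontinuity of $F$ has been traded for the explicit correction term $a^{m}\Pi_{0}(P)$, which manifestly goes to zero, and the remaining term converges to $F(P)^{m+1}Q^{m}=P$ since $Q_{\epsilon,c}+a\to Q$ in $\Psi^{1}(M,E)$. You should replace your continuity assertion with this explicit computation (or supply a full proof of the continuity of $R\mapsto F(R)|R|^{m}$ on the relevant set, which would require an argument of comparable length in any case).
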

\begin{proof}
  As $F(P_{\epsilon,c,a})=F(Q_{\epsilon,c}+a)$ and  $|P_{\epsilon,c,a}|=|Q_{\epsilon,c}+a|^{m}$, we have
  \begin{equation*}
      Q_{\epsilon,c}+a=F(Q_{\epsilon,c}+a)|Q_{\epsilon,c}+a|=F(P_{\epsilon,c,a})|P_{\epsilon,c,a}|^{\frac{1}{m}}.
  \end{equation*}Therefore, Lemma~\ref{lem:General.residues-eta-zeta-PQ} shows that, for any non-zero integer $k\leq n$, the following are equivalent:
  \begin{enumerate}
      \item[(i)]  The eta function $\eta(P_{\epsilon,c,a};s)$ and the zeta functions $\zeta_{\updown}(P_{\epsilon,c,a};s)$ and $\zeta(|P_{\epsilon,c,a}|;s)$
    are singular at $s=\frac{k}{m}$.

      \item[(ii)] The eta function $\eta(Q_{\epsilon,c}+a;s)$ and the zeta functions $\zeta_{\updown}(Q_{\epsilon,c}+a;s)$ and $\zeta(|Q_{\epsilon,c}+a|;s)$
    are singular at $s=k$.
  \end{enumerate}

  Moreover, Proposition~\ref{prop:PsiDOs1-sing-eta} asserts that, if $\epsilon$ is small enough, then, for all but countably many values of $a$
  and all but finitely many values of
    $c$, the eta function $\eta(Q_{\epsilon,c}+a;s)$ and the zeta functions $\zeta_{\updown}(Q_{\epsilon,c}+a;s)$ and $\zeta(|Q_{\epsilon,c}+a|;s)$
    are  singular at all non-zero integers~$\leq n$. Combining this with the equivalence of
    (i) and (ii) yields the 2nd part of the proposition.

In order to prove the 1st part of the proposition we have to show that $P_{\epsilon,c,a}$ converges to $P$ as $(\epsilon,c,a)$ goes to
$(0,0,0)$. The only (minor) difficulty comes
from the term $F(Q_{\epsilon,c} + a)$ which does not depend continuously on the data  $(\epsilon,c,a)$.

Let $f_{\epsilon,c}:\R \rightarrow \R$ be the function defined by
\begin{equation*}
    f_{\epsilon,c}(\lambda)=\lambda+\epsilon
|\lambda|+c\op{sign}(\lambda)\, | \lambda + \epsilon |\lambda|  |^{-n} \quad \text{if $\lambda\neq 0$ }, \qquad f_{\epsilon,c}(0)=0.
\end{equation*}
Then
$Q_{\epsilon,c}=f_{\epsilon,c}(Q)$, and hence
\begin{equation*}
    \Sp Q_{\epsilon,c}= f_{\epsilon,c}\left(\Sp Q\right).
\end{equation*}
Set $\mu:=\inf \{|\lambda|; \ \lambda \in \Sp Q, \lambda \neq 0\}=\|Q^{-1}\|^{-1}$. If $\lambda\geq \mu$, then
\[f_{\epsilon,c}(\lambda)=\lambda+\epsilon \lambda+c\lambda^{-n}\geq (1+\epsilon)\lambda\geq (1+\epsilon)\mu.\]
Likewise, if $\lambda \leq -\mu$, then
\[f_{\epsilon,c}(\lambda)=\lambda-\epsilon \lambda-c\lambda^{-n}\leq (1-\epsilon)\lambda\leq  -(1-\epsilon)\mu.\]
Thus,
\begin{equation*}
 \inf \{|\lambda|; \ \lambda \in \Sp Q_{\epsilon,c}, \lambda \neq 0\} \geq (1- |\epsilon|)\mu.
\end{equation*}

Let us now choose $a$ so that $0<a<(1-|\epsilon|)\mu$; this is always possible  if
$\epsilon$ and $a$ are small enough. Then $Q_{\epsilon,c}$ has no non-zero eigenvalues in the interval $[-a,a]$.
Therefore, using~(\ref{eq:Non-generic.Fa}) we see that
\begin{equation*}
    F(Q_{\epsilon,c}+a)=F(Q_{\epsilon,c})+\Pi_{0}(Q_{\epsilon,c}),
\end{equation*}and hence $P_{\epsilon,c,a}$ is equal to 
\begin{align*}
      \left( F(Q_{\epsilon,c}) +\Pi_{0}(Q_{\epsilon,c})\right)^{m+1}(Q_{\epsilon,c}+a)^{m}
    & = \left( F(Q_{\epsilon,c})^{m+1} + \Pi_{0}(Q_{\epsilon,c})\right)(Q_{\epsilon,c}+a)^{m}\\ 
   & =F(Q_{\epsilon,c})^{m+1}(Q_{\epsilon,c}+a)^{m}+a^{m}\Pi_{0}(Q_{\epsilon,c}).
\end{align*}
As~(\ref{Non-generic.spectral-projections-Pec}) and~(\ref{eq:Non-generic.Q-P})
show that $F(Q_{\epsilon,c})=F(Q)=F(P)$ and $\Pi_{0}(Q_{\epsilon,c})=\Pi_{0}(Q)=\Pi_{0}(P)$,
we deduce that
  \begin{equation}
    P_{\epsilon,c,a}= F(P)^{m+1}(Q_{\epsilon,c}+a)^{m}+a^{m}\Pi_{0}(P).
    \label{eq:Non-generic.Peca}
\end{equation}

As shown immediately above Corollary \ref{cor:PsiDOs1-sing-eta}, when $(\epsilon,c,a)$ converges to $(0,0,0)$ the operator
$Q_{\epsilon,c}+a$ converges to $Q$ in $\Psi^{1}(M,E)$. Combining this with~(\ref{eq:Non-generic.Q-P})
and~(\ref{eq:Non-generic.Peca}) shows that $P_{\epsilon,c,a}$ converges to $F(P)^{m+1}Q^{m}=P$. This proves the
first part of the proposition and completes the proof.
\end{proof}

As immediate consequences of Proposition~\ref{prop:higherPsiDOs-sing-eta1} we obtain

\begin{corollary}\label{cor:higherPsiDOs-sing-eta1}
    We always can perturbate $P$ by an arbitrary small selfadjoint element of $\Psi^{m}(M,E)$ in such way that the eta
    function $\eta(P;s)$ and the zeta functions $\zeta_{\updown}(P;s)$ and $\zeta(|P|;s)$ become singular at all
    points of the admissible set~(\ref{eq:general.admissible-set}).
\end{corollary}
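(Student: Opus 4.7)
The plan is to deduce the corollary directly from Proposition~\ref{prop:higherPsiDOs-sing-eta1} by choosing the three parameters $(\epsilon,c,a)$ simultaneously small and generic. Given any neighborhood $\cW$ of $P$ in $\Psi^{m}(M,E)$, the first part of Proposition~\ref{prop:higherPsiDOs-sing-eta1} guarantees the existence of some $\delta>0$ such that $P_{\epsilon,c,a}\in\cW$ whenever $|\epsilon|,c,|a|<\delta$. I would then invoke the second part of the proposition and shrink $\delta$ further if necessary so that the parameter $\epsilon$ lies in the range for which the genericity conclusion holds.

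Once $\epsilon$ is fixed small enough, I would choose $c\in(0,\delta)$ avoiding the finite exceptional set produced by Proposition~\ref{prop:higherPsiDOs-sing-eta1}, and then choose $a\in(0,\delta)$ avoiding the countable exceptional set (which is possible since $(0,\delta)$ is uncountable). For this choice of $(\epsilon,c,a)$, the operator $R:=P_{\epsilon,c,a}-P$ lies in $\Psi^{m}(M,E)$, is as small as we wish in the Fr\'echet topology, and is selfadjoint (being the difference of two selfadjoint elements of $\Psi^{m}(M,E)$). The perturbed operator $P+R=P_{\epsilon,c,a}$ then has its eta function $\eta(P+R;s)$ and its zeta functions $\zeta_{\updown}(P+R;s)$ and $\zeta(|P+R|;s)$ singular at every point of the admissible set~(\ref{eq:general.admissible-set}), which is exactly the claim.

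Essentially no new work is required beyond the bookkeeping above: the whole content of the corollary is already packaged in Proposition~\ref{prop:higherPsiDOs-sing-eta1}. The only point that deserves a line or two of justification is that the parameters may be chosen simultaneously small and generic, which follows since a countable union of points in $\R$ has empty interior and the finite exceptional set of values of $c$ can be avoided in any open subset of $[0,\infty)$. Hence there is no genuine obstacle here; the corollary is a direct specialization of the proposition, and the only mildly delicate point is ensuring that the topology of $\Psi^{m}(M,E)$ recalled in Appendix~\ref{sec:Topology-PsiDOs} is indeed the one with respect to which $P_{\epsilon,c,a}$ converges to $P$, which is precisely the content of part~(1) of Proposition~\ref{prop:higherPsiDOs-sing-eta1}.
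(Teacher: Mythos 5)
Your proof is correct and takes essentially the same approach as the paper, which presents this corollary (together with Corollary~\ref{cor:higherPsiDOs-sing-eta2}) as an ``immediate consequence'' of Proposition~\ref{prop:higherPsiDOs-sing-eta1} without further argument. You have correctly supplied the implicit bookkeeping: fix $\epsilon$ small enough for both parts of the proposition, then choose $c$ small and outside the finite exceptional set, then choose $a$ small and outside the countable exceptional set (which depends on $\epsilon$ and $c$), and note that $R:=P_{\epsilon,c,a}-P$ is selfadjoint, lies in $\Psi^{m}(M,E)$, and can be made arbitrarily small in the Fr\'echet topology by part~(1) of the proposition.
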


\begin{corollary}\label{cor:higherPsiDOs-sing-eta2}
  Generically, the eta and zeta functions of selfadjoint elliptic \psidos\ of positive orders are singular at all
    points of the admissible set~(\ref{eq:general.admissible-set}).
\end{corollary}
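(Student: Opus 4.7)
The plan is to derive this genericity statement from Corollary~\ref{cor:higherPsiDOs-sing-eta1} by a standard Baire category argument, organized as follows. Fix an order $m\in \N$ and denote by $\Ell^{m}(M,E)^{\text{sa}}$ the set of selfadjoint elements of $\Psi^{m}(M,E)$ that are elliptic. Ellipticity is an open condition in the Fr\'echet topology of $\Psi^{m}(M,E)$, and selfadjointness cuts out a closed real subspace, so $\Ell^{m}(M,E)^{\text{sa}}$ is an open subset of a Fr\'echet (hence Baire) space. For every $\sigma \in \Sigma$ I would introduce
\begin{equation*}
\mathcal{U}_{\sigma}:=\bigl\{P \in \Ell^{m}(M,E)^{\text{sa}};\ \eta(P;s),\ \zeta_{\updown}(P;s),\ \zeta(|P|;s) \text{ are all singular at } s=\sigma\bigr\},
\end{equation*}
and show that each $\mathcal{U}_{\sigma}$ is an open and dense subset of $\Ell^{m}(M,E)^{\text{sa}}$. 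The Baire category theorem then gives that $\bigcap_{\sigma\in \Sigma} \mathcal{U}_{\sigma}$ is dense, which is precisely the desired genericity statement (the admissible set $\Sigma$ is countable).

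Denseness is the easier of the two, and is an immediate consequence of Corollary~\ref{cor:higherPsiDOs-sing-eta1}: starting from any $P\in \Ell^{m}(M,E)^{\text{sa}}$, the corollary produces an arbitrarily small selfadjoint perturbation $R\in \Psi^{m}(M,E)$ with $P+R$ lying in $\bigcap_{\sigma\in \Sigma}\mathcal{U}_{\sigma}$; in particular every $\mathcal{U}_{\sigma}$ meets every neighborhood of~$P$.

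For openness, the main observation is that the residues of $\eta(P;s)$, $\zeta_{\updown}(P;s)$ and $\zeta(|P|;s)$ at $s=\sigma$ are given by the formulas~(\ref{eq:Background.Residues-zeta}), (\ref{eq:Background.Residues-zeta|P|}), (\ref{eq:Background.Residues-eta}) in terms of noncommutative residues of the operators $P_{\updown}^{-\sigma}$, $|P|^{-\sigma}$ and $F|P|^{-\sigma}$. I would invoke the fact (due to Seeley and Kontsevich--Vishik, already recalled in Section~\ref{sec.EFNCR}) that the complex powers $P \mapsto P_{\updown}^{-\sigma}$, $P\mapsto |P|^{-\sigma}$ and $P\mapsto F(P)|P|^{-\sigma}$ depend continuously on $P$ in the Fr\'echet topology of $\Psi^{m\Re(-\sigma)}(M,E)$ (once we restrict to $P$ close enough to a fixed $P_{0}$ so that a common spectral cut can be used; ellipticity and selfadjointness make this automatic). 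Combined with the fact that the noncommutative residue is a continuous linear functional on $\Psi^{q}(M,E)$ for every $q$, this shows that each of the three residues in question is a continuous function of $P$, so the non-vanishing condition defines an open subset. Taking finite intersections then proves openness of $\mathcal{U}_{\sigma}$.

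The main (and essentially only) obstacle is the continuous dependence of the complex powers on $P$: one must be careful that the contour used in the Seeley construction can be chosen uniformly for $P$ in a small neighborhood of any fixed operator. This is where the assumption of selfadjointness and ellipticity does the work, since it guarantees that the spectrum of $P$ stays uniformly away from, say, the rays $\pm i[0,\infty)$ under small perturbations, and so the contour integral defining $P_{\updown}^{-\sigma}$ depends continuously (indeed holomorphically) on the parameter $P$. Once this point is settled, combining openness, denseness, and the Baire category theorem yields the corollary.
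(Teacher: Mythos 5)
Your density step is exactly the paper's entire argument: Corollary~\ref{cor:higherPsiDOs-sing-eta1} already produces, for any $P$, an arbitrarily small selfadjoint perturbation landing in $\bigcap_{\sigma\in\Sigma}\mathcal{U}_{\sigma}$, which is density of that set in the space of selfadjoint elliptic \psidos\ of order $m$. The paper reads ``generically'' in precisely this sense, which is why Corollary~\ref{cor:higherPsiDOs-sing-eta2} is presented as an ``immediate consequence'' of Proposition~\ref{prop:higherPsiDOs-sing-eta1} with no further argument and, in particular, no Baire category theorem. So your proof is correct, but it goes further than what the paper actually does: you try to upgrade density to residuality ($G_{\delta}$ dense) by showing each $\mathcal{U}_{\sigma}$ is open. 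This matches the heuristic sketch in the Introduction (which says the non-vanishing of the residues ``can be shown to be open conditions''), but the paper deliberately declines to prove that and settles for the explicit perturbations.

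The openness step is where the work would have to happen, and as written it is not fully established. You correctly flag the spectral-cut uniformity issue for $P\mapsto P_{\updown}^{-\sigma}$, but there is a second subtlety you do not address: under small perturbations of a selfadjoint elliptic $P$, finitely many eigenvalues can cross $0$, so $\ker P$, and hence $F(P)=P|P|^{-1}$ and $\Pi_{0}(P)$, jump discontinuously in the operator topology. Consequently $P\mapsto |P|^{-\sigma}$ and $P\mapsto F(P)|P|^{-\sigma}$ are not literally continuous as maps into $\Psi^{-m\sigma}(M,E)$. The resolution is that these jumps are finite-rank smoothing perturbations, which are annihilated by the noncommutative residue, so the residues $\Res |P|^{-\sigma}$, $\Res[F(P)|P|^{-\sigma}]$, $\Res P_{\updown}^{-\sigma}$ do depend continuously on $P$; but this observation needs to be made explicitly (work modulo $\Psi^{-\infty}$, or equivalently note that the residue densities are locally determined by the symbol). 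Since the corollary as stated only claims density, your argument for that part is sound and coincides with the paper's; just be aware that your Baire upgrade is a stronger statement than the paper proves, and its openness ingredient would require the above repair to be complete.
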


\appendix

 \section{Topologies on Spaces of  \psidos}\label{sec:Topology-PsiDOs}
 In this section, for reader's convenience we briefly recall the definition of the standard Fr\'echet space topology of the space
 $\Psi^{m}(M,E)$, $m \in \C$.

 Let $U$ be an open subset of $\Rn$. The space of classical symbols $S^{m}(\URn,M_{r}(\C))$ consists of functions
 $p(x,\xi)\in C^{\infty}(\URn,M_{r}(\C))$ with an asymptotic expansion,
 \begin{equation}
     p(x,\xi) \sim \sum_{j\geq 0} p_{m-j}(x,\xi),
     \label{eq:Appendix.asymptotic-symbols}
 \end{equation}where $p_{m-j}(x,\xi)\in C^{\infty}(\URno,M_{r}(\C))$ is homogeneous of degree $m-j$ with respect to the
 variable $\xi$, i.e.,
 \begin{equation*}
     p_{m-j}(x,\lambda \xi)=\lambda^{m-j}p_{m-j}(x,\xi) \qquad \forall \lambda>0,
 \end{equation*}and the asymptotics is taken in the sense that, for all compacts $K\subset U$, integers $N\geq 1$ and
 multi-orders $\alpha$ and $\beta$, there exists a constant $C_{KN\alpha\beta}>0$ such that, for all $x \in K$ and
 $\xi\in \Rn$ with $|\xi|\geq 1$, we have
 \begin{equation*}
     \biggl| \partial_{x}^{\alpha}\partial_{\xi}^{\beta}\biggl(p(x,\xi)-\sum_{j<N}p_{m-j}(x,\xi)\biggr) \biggr| \leq
     C_{KN\alpha\beta}|\xi|^{m-N-|\beta|}.
 \end{equation*}

 We endow $S^{m}(\URn,M_{r}(\C))$ with the locally convex topological vector space topology defined by the semi-norms,
 \begin{gather*}
     \mathfrak{p}_{KN}(p):= \sup_{|\alpha|+|\beta|\leq N}\sup_{(x,\xi)\in K\times
     \Rn}(1+|\xi|)^{-m+|\beta|}|\partial_{x}^{\alpha}\partial_{\xi}^{\beta}p(x,\xi)|  \\
    \mathfrak{q}_{KN}(p):=\sup_{|\alpha|+|\beta|\leq N}\sup_{\substack{x\in K
      \\ |\xi|\geq 1}}(1+|\xi|)^{-m+|\beta|+N} \biggl|
      \partial_{x}^{\alpha}\partial_{\xi}^{\beta}\biggl(p(x,\xi)-\sum_{j<N}p_{m-j}(x,\xi)\biggr) \biggr|,
 \end{gather*}where $K$ ranges over a (countable) compact exhaustion of $U$ and $N$ ranges over all positive integers. One can check
 that with respect to this topology $S^{m}(\URn,M_{r}(\C))$ is a Fr\'echet space.

 Let $\tau:E_{|V}\stackrel{\sim}{\longrightarrow} V\times \C^{r}$ be a local trivialization of $E$ over an open $V\subset M$
 which is the domain of a local chart $\kappa:V\stackrel{\sim}{\longrightarrow} U\subset \Rn$. We then have
 pushforward and pullback maps,
 \begin{equation*}
     \tau_{*}:C^{\infty}(V,E_{|V}) \rightarrow C^{\infty}(V,\C^{r}) \qquad \text{and} \qquad
      \tau^{*}: C^{\infty}(V,\C^{r})\rightarrow C^{\infty}(V,E_{|V}),
 \end{equation*}
 such that
 \begin{gather*}
   \tau(u(x))=(x,\tau_{*}u(x)) \qquad \forall u \in  C^{\infty}(V,E_{|V}) ,   \\
     \tau^{*}u(x)=\tau^{-1}(x,u(x)) \qquad \forall u \in  C^{\infty}(V,\C^{r}) .
 \end{gather*}

 Let $P\in \Psi^{m}(M,E)$. In the local coordinates and trivialization defined by $\kappa$ and $\tau$, the operator $P$
 corresponds to the operator $P_{\kappa,\tau}\in \Psi^{m}(U,\C^{r})$ such that
 \begin{equation*}
     Pu(x)= \tau^{*}\biggl[ P_{\kappa,\tau}\biggl((\tau_{*}u)\circ \kappa^{-1}\biggr)\biggr] (\kappa(x)) \qquad \forall
     u \in C^{\infty}_{c}(V,E) \ \forall x \in V.
 \end{equation*}

 Let $\varphi$ and $\psi$ be functions in $C^{\infty}_{c}(U)$. Then the Schwartz kernel of the operator $\varphi  P_{\kappa,\tau} \psi$ has compact support, and hence $\varphi
P_{\kappa,\tau} \psi$ is properly supported. Therefore, there exists a unique symbol $p^{\varphi,\psi}_{\kappa,\tau}(P)(x,\xi)\in
 S^{m}(\URn,M_{r}(\C))$ so that
 \begin{equation*}
     \varphi(x) P_{\kappa,\tau} (\psi u) (x)= (2\pi)^{-n} \int e^{ix\cdot \xi} p^{\varphi,\psi}_{\kappa,\tau}(P)(x,\xi).\hat{u}(\xi) d\xi \qquad \forall u \in
     C^{\infty}_{c}(U,\C^{r}).
 \end{equation*}Namely, setting $e_{\xi}(x):=e^{ix\cdot \xi}$, we have
 \begin{equation*}
     p^{\varphi,\psi}_{\kappa,\tau}(P)(x,\xi)(P)(x,\xi)=e^{-ix\cdot \xi}\varphi(x) P_{\kappa,\tau}(\psi e_{\xi})(x) \qquad \forall (x,\xi)\in \URn.
 \end{equation*}
We thus get a linear map,
 \begin{equation}
     \Psi^{m}(M,E) \ni P \longrightarrow p^{\varphi,\psi}_{\kappa,\tau}(P)(x,\xi)\in
 S^{m}(\URn,M_{r}(\C)).
     \label{eq:Appendix.inclusion-Pkt}
 \end{equation}

 The topology of $\Psi^{m}(M,E)$ is the weakest locally convex topological vector space topology that makes continuous
\emph{all} the linear maps~(\ref{eq:Appendix.inclusion-Pkt}) as $(\kappa,\tau)$ ranges  over all pairs where $\kappa$ is a local
 chart for $M$ and $\tau$ is a local trivialization of $E$ over the domain of $\kappa$ and the pair $(\varphi,\psi)$
 range over all pairs of functions in $C^{\infty}_{c}(U)$.
 With respect to this topology
 $\Psi^{m}(M,E)$ is a Fr\'echet space.  In addition, as $M$ is compact, one can check that the standard operations
 with \psidos, such
 that as products, taking adjoints or actions of diffeomorphisms, are all continuous with respect to this topology.

\section{Corrections to Theorem~6.3 of~\cite{Po:SAZFNCR}}\label{sec:eta-odd-class}
As we shall explain Proposition~\ref{prop:1stDO.perturbation} contradicts Theorem~6.3 of~\cite{Po:SAZFNCR}. In this 
appendix, we state and prove the correct version of that statement. 

Let $P:C^{\infty}(M,E)\rightarrow C^{\infty}(M,E)$ be a selfadjoint elliptic \psido\ of order $m$, $m \in \N$, which is odd-class,
in the sense that, in any given local coordinates and trivialization of $E$, the homogeneous components of its symbol
satisfy~(\ref{eq:DO.odd-class-condition}). The class of odd-class \psidos\ is a subalgebra of
$\Psi^{\Z}(M,E)$. In particular, it is invariant  under perturbations by constants. Moreover, any parametrix of an odd-class \psido\
is again an odd-class \psido. Therefore, for all $k \in \Z$, the operator $P^{-k}$ is an odd-class \psido.


Theorem~6.3 of~\cite{Po:SAZFNCR} asserts that if $m$ 
and $n=\dim M$ have opposite parities, then the eta function $\eta(P;s)$
is regular at all integer points.  If $P$ has order $1$ and $n$ is even, then this implies that
the eta function is actually entire. This assertion is clearly contradicted by Proposition~\ref{prop:1stDO.perturbation}.

As we shall now explain, the correct version of Theorem~6.3 of~\cite{Po:SAZFNCR} is

\begin{theorem}\label{thm:odd-class.new}
    \begin{enumerate}
        \item  If $n$ is odd and $m$ is even, then $\eta(P;s)$ is regular at all integer
        points.\smallskip

        \item  If $n$ is even and $m$ is odd, then $\eta(P;s)$ is regular at all even integer
        points.\smallskip

        \item  If $n$ is even and $m=1$, then the singular set of the function $\eta(P;s)$ only contains odd integers~$\leq
        n$.
    \end{enumerate}
\end{theorem}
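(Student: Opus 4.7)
My plan is to reduce the residue of $\eta(P;s)$ at an integer point $\sigma\in\Sigma$ to the noncommutative residue of either $P^{-\sigma}$ or $FP^{-\sigma}$ modulo smoothing, and then to exploit the parity of the degree $-n$ symbol component under $\xi\to-\xi$. First, by~(\ref{eq:Background.Residues-eta}) we have $m\cdot\Res_{s=\sigma}\eta(P;s)=\Res[F|P|^{-\sigma}]$. The identities $|P|=FP$ and $F^{2}=1-\Pi_{0}(P)$ modulo $\Psi^{-\infty}(M,E)$ give $|P|^{-\sigma}=F^{\sigma}P^{-\sigma}$ modulo $\Psi^{-\infty}(M,E)$, hence
\[ F|P|^{-\sigma}=F^{\sigma+1}P^{-\sigma}\bmod\Psi^{-\infty}(M,E), \]
which equals $FP^{-\sigma}$ when $\sigma$ is even and $P^{-\sigma}$ when $\sigma$ is odd.

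Next, I would establish the parity structure of the relevant symbols. Since $P$ is odd-class of order $m$, each integer power $P^{-\sigma}$ is odd-class of order $-m\sigma$, so its degree $-n$ symbol $q_{-n}$ satisfies $q_{-n}(x,-\xi)=(-1)^{-n}q_{-n}(x,\xi)$. The crucial claim is that the homogeneous components $F_{-k}$ of $F=P|P|^{-1}$ satisfy $F_{-k}(x,-\xi)=(-1)^{m-k}F_{-k}(x,\xi)$, so that the degree $-n$ component of $FP^{-\sigma}$ carries parity $(-1)^{m-n}$ in $\xi$. I would prove this claim via Seeley's Cauchy representation
\[
|P|^{-1}=\frac{1}{2\pi i}\int_{\Gamma}\lambda^{-1/2}(P^{2}-\lambda)^{-1}\,d\lambda.
\]
Since $P^{2}$ is odd-class of even order $2m$, an induction on the symbol recursion for $(P^{2}-\lambda)^{-1}$ shows that its degree $-2m-k$ components have parity $(-1)^{k}$ in $\xi$; integrating against $\lambda^{-1/2}$ preserves this parity and gives the corresponding structure for $|P|^{-1}$, and multiplying by $P$ produces the claim for $F$.

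The three cases of the theorem then follow because an odd function of $\xi$ integrates to zero over $S^{n-1}$. In case~(1), with $n$ odd and $m$ even, both $(-1)^{-n}$ and $(-1)^{m-n}$ equal $-1$, so $\Res P^{-\sigma}=0$ and $\Res[FP^{-\sigma}]=0$ at every integer $\sigma\in\Sigma$, forcing the regularity of $\eta(P;s)$ there. In case~(2), with $n$ even, $m$ odd, and $\sigma$ even, the residue equals $\tfrac{1}{m}\Res[FP^{-\sigma}]$ whose degree $-n$ symbol has parity $(-1)^{m-n}=-1$, giving vanishing. Case~(3) is immediate from case~(2), since for $m=1$ the admissible set $\Sigma$ consists of all nonzero integers~$\leq n$, and case~(2) excludes the even ones. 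The main technical obstacle is the inductive parity analysis for the symbol of $(P^{2}-\lambda)^{-1}$ and its propagation through the Seeley contour integral; once this is carried out, the three cases combine mechanically.
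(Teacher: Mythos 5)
Your proof is mathematically correct, but it takes a genuinely different route from the paper's. The paper reduces the statement to properties of the \emph{zeta} functions: it uses the relation between $\zeta_{\up}(P;s)$, $\zeta_{\down}(P;s)$, and $\eta(P;s)$ encoded in Proposition~\ref{prop:odd-class.zeta-eta.new}, together with Theorems~5.1 and~5.2 of~\cite{Po:SAZFNCR} which describe the behavior of $\zeta_{\up}(P;s)-\zeta_{\down}(P;s)$ at integer points, and only needs the odd-class structure of $P^{-k}$. You instead work entirely at the symbol level. The heart of your argument is the claim that the degree $-j$ symbol component of $F=P|P|^{-1}$ carries parity $(-1)^{m+j}$ under $\xi\mapsto-\xi$; combined with the odd-class parity of $P^{-\sigma}$ and the composition calculus, this makes the degree $-n$ symbol of $F|P|^{-\sigma}$ (after reduction to $FP^{-\sigma}$ or $P^{-\sigma}$ mod $\Psi^{-\infty}$ via $F^{2}=1$) carry parity $(-1)^{m+n}$ or $(-1)^{n}$ respectively, so the sphere integral in~(\ref{eq:Background.residue-density}) vanishes precisely in the cases claimed. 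I have checked the parity computation: the resolvent parametrix recursion for $(P^{2}-\lambda)^{-1}$ does give parity $(-1)^{j}$ at degree $-2m-j$, the Seeley contour integral in $\lambda$ is transparent to $\xi$-parity, and the composition $P\cdot|P|^{-1}$ then yields the stated parity for $F$. The trade-off is clear: your approach is self-contained and does not appeal to the auxiliary theorems about $\zeta_{\up}-\zeta_{\down}$ from~\cite{Po:SAZFNCR}, but it requires carrying through the inductive parity analysis of the resolvent symbol and its propagation through the contour integral, whereas the paper gets the even-integer case (parts (2) and (3)) essentially for free by quoting those external results. For part (1) with $k$ odd, your reduction $F|P|^{-k}=P^{-k}\bmod\Psi^{-\infty}$ coincides with the paper's; the divergence is only in how the remaining residues are shown to vanish.
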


In other words, Theorem~6.3 of~\cite{Po:SAZFNCR} is true without modifications in odd dimension, but in even dimension we only
have regularity at \emph{even} integer points.

As it turns out, the caveat in the proof of Theorem~6.3 of~\cite{Po:SAZFNCR} comes from using  Proposition~6.2
of~\cite{Po:SAZFNCR}, which relates the eta function $\eta(P;s)$ to the zeta functions $\zeta_{\updown}(P;s)$. Namely,
by Eq.~(6.2) of~\cite{Po:SAZFNCR}, for all $s \in \C$,  we have
 \begin{equation*}
    P_{\scriptscriptstyle{\uparrow}}^{s}-P_{\scriptscriptstyle{\downarrow}}^{s} =
    (1-e^{i\pi s})P_{\scriptscriptstyle{\uparrow}}^{s}-(1-e^{i\pi s})F|P|^{s}.
 \end{equation*}Therefore, we have the equality of meromorphic functions on $\C$,
  \begin{equation}
       \zeta_{\scriptscriptstyle{\uparrow}}(P;s)-\zeta_{\scriptscriptstyle{\downarrow}}(P;s)= (1-e^{-i\pi s})
     \zeta_{\scriptscriptstyle{\uparrow}}(P;s) - (1-e^{-i\pi s})\eta(P;s).
       \label{eq:odd-class.zeta-eta1}
  \end{equation}
  The above equality is the content of the first part of Proposition 6.4 of \cite{Po:SAZFNCR}. However, the other
  parts of this statement about the residues of the eta functions at integer points do not hold in full generality, but they do hold if we restrict ourselves
  to \emph{even} integers. More precisely, we have

  \begin{proposition}\label{prop:odd-class.zeta-eta.new}
  \begin{enumerate}
      \item  If $k$ is an even integer, then
  \begin{equation}
      m.  \lim_{s\rightarrow k}(\zeta_{\scriptscriptstyle{\uparrow}}(P;s)-\zeta_{\scriptscriptstyle{\downarrow}}(P;s)) =
i\pi \Res P^{-k}- i\pi m.\res_{s=k}\eta(P;s).
\label{eq:odd-class.zeta-eta2}
  \end{equation}
      \item  Let $k$ be an even integer such that $\Res P^{-k}=0$, so that both
$\zeta_{\scriptscriptstyle{\uparrow}}(P;s)$ and $\zeta_{\scriptscriptstyle{\downarrow}}(P;s)$
are regular at $s=k$. Then
    \begin{equation*}
        \zeta_{\scriptscriptstyle{\uparrow}}(P;k)=\zeta_{\scriptscriptstyle{\downarrow}}(P;k) \Longleftrightarrow
        \text{$\eta(P;s)$ is regular at $s=k$}.
     \end{equation*}
  \end{enumerate}
\end{proposition}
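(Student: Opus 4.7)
The plan is to derive both statements from the functional identity~(\ref{eq:odd-class.zeta-eta1}) by expanding at $s=k$. For part~(1), the key observation is that when $k$ is an even integer we have $e^{-i\pi k}=1$, so the factor $1-e^{-i\pi s}$ has a simple zero at $s=k$ with
\[
1-e^{-i\pi s}=i\pi(s-k)+O\!\left((s-k)^{2}\right).
\]
Each of $\zetaup(P;s)$ and $\eta(P;s)$ has at worst a simple pole at $s=k$ by~(\ref{eq:Background.Residues-zeta}) and~(\ref{eq:Background.Residues-eta}), so the bracket $\zetaup(P;s)-\eta(P;s)$ has a Laurent expansion
\[
\zetaup(P;s)-\eta(P;s)=\frac{A}{s-k}+O(1),
\]
where, using~(\ref{eq:Background.Residues-zeta}) together with the identity $P_{\up}^{-k}=P^{-k}$ from~(\ref{eq:NCR.Pudpdown-integers}),
\[
A=\frac{1}{m}\Res P^{-k}-\res_{s=k}\eta(P;s).
\]
Multiplying this expansion by the simple zero $1-e^{-i\pi s}$ cancels the pole, and then~(\ref{eq:odd-class.zeta-eta1}) shows that $\zetaup(P;s)-\zetadown(P;s)$ extends continuously to $s=k$ with
\[
\lim_{s\to k}\bigl(\zetaup(P;s)-\zetadown(P;s)\bigr)=i\pi A.
\]
Multiplying through by $m$ yields~(\ref{eq:odd-class.zeta-eta2}).

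For part~(2), I would first apply~(\ref{eq:Background.Residues-zeta}) to $\zetadown$ and use $P_{\down}^{-k}=P^{-k}$ to obtain $m\cdot\res_{s=k}\zetadown(P;s)=\Res P^{-k}$, so that the hypothesis $\Res P^{-k}=0$ makes \emph{both} $\zetaup(P;s)$ and $\zetadown(P;s)$ regular at $s=k$, and the limit in part~(1) is simply the value $\zetaup(P;k)-\zetadown(P;k)$. Substituting $\Res P^{-k}=0$ into~(\ref{eq:odd-class.zeta-eta2}) gives
\[
m\bigl(\zetaup(P;k)-\zetadown(P;k)\bigr)=-i\pi m\cdot\res_{s=k}\eta(P;s),
\]
and since $i\pi m\neq 0$, both sides vanish simultaneously, which is precisely the stated equivalence.

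No genuine analytical obstacle is expected here; the whole argument reduces to the cancellation between the simple zero of $1-e^{-i\pi s}$ at even integers and the (at worst) simple pole of the bracket $\zetaup(P;s)-\eta(P;s)$, after which everything is residue bookkeeping. The one point that deserves explicit care is checking that both $\zetaup$ and $\zetadown$ are regular at $s=k$ under the hypothesis of~(2), so that the left-hand side of~(\ref{eq:odd-class.zeta-eta2}) can be interpreted as an honest difference of values rather than just a limit.
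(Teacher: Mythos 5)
Your argument is correct and follows essentially the same route as the paper: both use the identity~(\ref{eq:odd-class.zeta-eta1}), the simple zero of $1-e^{-i\pi s}$ at even integers, and the residue formulas~(\ref{eq:Background.Residues-zeta}) and~(\ref{eq:Background.Residues-eta}) to compute the limit, then deduce part~(2) by setting $\Res P^{-k}=0$. The only cosmetic difference is that you combine $\zetaup-\eta$ into one bracket before multiplying by the vanishing factor, whereas the paper computes the two limits $\lim_{s\to k}(1-e^{-i\pi s})\zetaup(P;s)$ and $\lim_{s\to k}(1-e^{-i\pi s})\eta(P;s)$ separately and subtracts.
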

\begin{proof}
   As the 2nd part is an immediate consequence of the first part, we only have to prove the latter. Suppose that $k$ is an even integer.
   Then $1-e^{-i\pi s}\sim i\pi(s-k)$ near $s=k$, and hence
\begin{gather*}
 \lim_{s\rightarrow k}  (1-e^{-i\pi s})
     \zeta_{\scriptscriptstyle{\uparrow}}(P;s)= i\pi \Res_{s=k} \zeta_{\scriptscriptstyle{\uparrow}}(P;s)= i\pi  m^{-1}\Res P^{-k},   \\
     \lim_{s\rightarrow k}(1-e^{-i\pi s})\eta(P;s)= i\pi \Res_{s=k} \eta(P;s).
\end{gather*}
Combining this with~(\ref{eq:odd-class.zeta-eta1}) then gives~(\ref{eq:odd-class.zeta-eta2}). This proves the first
part and completes the proof.
\end{proof}


We are now in a position to prove Theorem~\ref{thm:odd-class.new}.

\begin{proof}[Proof of Theorem~\ref{thm:odd-class.new}]
    Suppose that $n$ is odd and $m$ is even, and let $k \in \Z$. As mentioned above $P^{-k}$ is an odd-class
    \psido. Moreover, as alluded to in the proof of Lemma~\ref{lem:DO.singularities-odd-dimension}, in odd dimensions the noncommutative residue vanishes
    on all odd-class \psidos. In particular, we see that
    \begin{equation}
        \Res P^{-k}=0.
        \label{eq:odd-class.resPk}
    \end{equation}

    Suppose that $k$ is even. Then, by Theorem~5.1
    of~\cite{Po:SAZFNCR},
    \begin{equation*}
         \lim_{s\rightarrow k}(\zeta_{\scriptscriptstyle{\uparrow}}(P;s)-\zeta_{\scriptscriptstyle{\downarrow}}(P;s)) = 0.
    \end{equation*}Combining this with (\ref{eq:odd-class.zeta-eta2}) and~(\ref{eq:odd-class.resPk})
    shows that $\eta(P;s)$ is regular at $s=k$ when $k$ is even.

    Assume that $k$ is odd. Thanks to~(\ref{eq:Background.Residues-eta}) we have
    \begin{equation*}
        \Res_{s=k}\eta(P;s)=m^{-1}\Res \left[F|P|^{-k}\right]=m^{-1}\Res \left[F^{k+1}P^{-k}\right].
    \end{equation*}
    Moreover, as $k+1$ is even, Eq.~(\ref{eq:DiffO-signPk}) implies that $F^{k+1}P^{-k}=P^{-k} \bmod \Psi^{-\infty}(M,E)$. Thus,
    \begin{equation*}
         \Res_{s=k}\eta(P;s)=m^{-1}\Res P^{-k}=0.
    \end{equation*}This shows that $\eta(P;s)$ is regular at $s=k$, even when $k$ is odd.

    Suppose now that $n$ is even and $m$ is odd, and let $k$ be an \emph{even} integer. Thanks to Theorem~5.2
    of~\cite{Po:SAZFNCR} we have
        \begin{equation*}
        m. \lim_{s\rightarrow k}(\zeta_{\scriptscriptstyle{\uparrow}}(P;s)-\zeta_{\scriptscriptstyle{\downarrow}}(P;s)) =
         i\pi \Res P^{-k}.
    \end{equation*}Combining this with~(\ref{eq:odd-class.zeta-eta2})
    proves that $\Res_{s=k}\eta(P;s)=0$, that is, the function $\eta(P;s)$ is regular at $s=k$.

    If we further assume that $P$ has order $1$, then the admissible set~(\ref{eq:Non-generic.singular-set}) at which
    the function $\eta(P;s)$ may be singular only contains integers~$\leq n$. Since
    we know that $\eta(P;s)$ is regular at all even integer points, we see that the singularities of $\eta(P;s)$ can
    only occur at odd integers~$\leq n$. The proof of  Theorem~\ref{thm:odd-class.new} is thus complete.
\end{proof}

\end{document}